\documentclass{amsart}




\usepackage{amssymb}
\usepackage{tikz}
\usepackage{tikz-cd}



\usepackage{graphicx}

\usepackage{color}


\newtheorem{thm}{Theorem}[section]
\newtheorem{cor}[thm]{Corollary}
\newtheorem{lem}[thm]{Lemma}
\newtheorem{prop}[thm]{Proposition}

\theoremstyle{definition}
\newtheorem{defn}[thm]{Definition}
\newtheorem{exm}[thm]{Example}
\newtheorem{remark}[thm]{Remark}

\newcommand{\m}{\mathbf{m}}
\newcommand{\kk}{\mathbb{K}}
\newcommand{\cS}{\mathcal{S}}
\newcommand{\J}{\mathcal{J}}
\newcommand{\frakm}{\mathfrak{m}}
\DeclareMathOperator{\syz}{syz}

\def\ZZ{\mathbb{Z}}

\def\dd{\partial}

\def\cal{\mathcal}  
\newcommand{\A}{{\cal A}}







\numberwithin{equation}{section}


\newcommand{\lt}{L_2^{\text{\tiny{trip}}}}




\begin{document}

\title[]{Free Multiplicities on the moduli of $X_3$}



\author{Michael DiPasquale}
\address{Department of Mathematics, Oklahoma State University, Stillwater, OK, 74078 USA}
\curraddr{}
\email{mdipasq@okstate.edu}

\author{Max Wakefield}
\address{Department of Mathematics, United States Naval Academy, Annapolis, MD, 21402 USA}
\curraddr{}
\email{wakefiel@usna.edu}




\date{}

\dedicatory{}

\begin{abstract} In this note we study the freeness of the module of derivations on all moduli of the $X_3$ arrangement with multiplicities. We use homological techniques stemming from work of Yuzvinsky, Brandt, and Terao which have recently been developed for multi-arrangements by the first author, Francisco, Mermin, and Schweig.

\end{abstract}

\maketitle



\section{Introduction}

Fix a field $\kk$ of characteristic zero and let $S=\kk [x,y,z]$ be the polynomial ring on three variables which we associate to the symmetric algebra of a three dimensional vector space $V$. The $X_3$ arrangement is the collection of the hyperplanes $H_1=\{ x=0\}$, $H_2=\{ y=0\}$, $H_3=\{z=0\}$, $H_4=\{x+y=0\}$, $H_5=\{x+z=0\}$, and $H_6=\{y+z=0\}$ in $V$. The associated matroid is called the rank 3 whirl (see \cite{Ox} Appendix and pictured in Figure \ref{X3pic}) and is the unique relaxation of the braid matroid whose free multiplicities were studied in \cite{DFMS16}. A multiplicity on $X_3$ is a function $\m :X_3 \to \ZZ_{>0}$ and let $\m=[m_1,\ldots ,m_6]$ denote the multiplicity vector on $\A$ where $\m(H_i)=m_i$ for all $1\leq i\leq 6$ respectively. In Figure \ref{X3pic} we show a real projective picture of $X_3$ with the corresponding multiplicities.

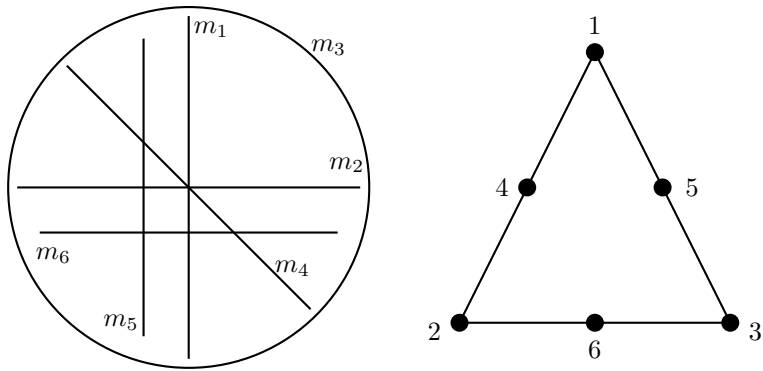
\begin{figure}

\begin{tikzpicture}[scale=.6]

\draw[thick] (0,0) circle (4cm);
\draw[thick] (-3.8,0)--(3.8,0);
\draw[thick] (0,-3.8)--(0,3.8);
\draw[thick] (-2.7,2.7)--(2.7,-2.7);
\draw[thick] (-1,3.3)--(-1,-3.3);
\draw[thick] (3.3,-1)--(-3.3,-1);
\node at (.5,3.5) {$m_1$};
\node at (3.5,.5) {$m_2$};
\node at (3.1,3.1) {$m_3$};
\node at (2.3,-1.8) {$m_4$};
\node at (-1.5,-3) {$m_5$};
\node at (.-3,-1.5) {$m_6$};

\draw[thick,fill] (6,-3) circle (5pt);
\draw[thick,fill] (9,-3) circle (5pt);
\draw[thick,fill] (12,-3) circle (5pt);
\draw[thick,fill] (9,3) circle (5pt);
\draw[thick,fill] (7.5,0) circle (5pt);
\draw[thick,fill] (10.5,0) circle (5pt);
\draw[thick] (6,-3)--(12,-3)--(9,3)--(6,-3);

\node[below, left] at (5.8,-3.2) {2};
\node[below] at (9,-3.2) {6};
\node[below,right] at (12.2,-3.2) {3};
\node[above] at (9,3.2) {1};
\node[left] at (7.3,0) {4};
\node[right] at (10.8,0) {5};

\end{tikzpicture}
\caption{A projective picture of the $X_3$ arrangement labeled with multiplicities and its associated matroid diagram}\label{X3pic}
\end{figure}

The problem of classifying free multiplicities on a free arrangement is usually very difficult (in Section \ref{defs} we cover the background material for freeness). At the moment there are only four main results where arrangements (or classes of arrangements) free multiplicities are classified. 

\begin{enumerate}

\item In \cite{ATY09} Abe, Yoshinaga, and Terao  show that the only arrangements with the property that all multiplicities are free are products of one or two dimensional arrangements.

\item In \cite{Y10} Yoshinaga shows that generic arrangements have no free multiplicities.

\item In \cite{A07} Abe classifies the free multiplicities on the deleted $A_3$ braid arrangement.

\item In \cite{DFMS16} the first author, Francisco, Mermin, and Schweig finish the classification of free multiplicities on the full $A_3$ braid arrangement started in \cite{ATW08} by Abe, Terao, and the second author and progressed in \cite{ANN09} by Abe, Nuida, and Numata.

\end{enumerate}

The idea of this note is to be a positive addition to this list.  It turns out that the $X_3$ arrangement has a one dimensional moduli space (the realization space of the rank 3 whirl matroid, see Section \ref{defs} for details). The main result, Theorem~\ref{thm:X3Classification}, is not only a classification of the free multiplicities on $X_3$ but also a classification of the free multiplicities on all of its moduli.  Since the characteristic polynomial of $X_3$ does not factor, no arrangement in the moduli of $X_3$ is free (by Terao's factorization theorem~\cite{Te81}).  This is the first non-generic arrangement to have it's free multiplicities classified where (1) it's not free and (2) it has non-trivial moduli.  One consequence of this classification is many multiplicities where freeness of the associated multi-arrangement is not combinatorially determined, contrary to Terao's conjecture for simple arrangements (see Example~\ref{ex:FreeNotComb}).

Our proof of the classification follows along the lines of two recent papers~\cite{D17,DFMS16} and also draws on work of Brandt and Terao~\cite{BT94} and Schenck and Stillman~\cite{SS96}.  We prove that freeness of $D(X_3,\m)$ is characterized by the vanishing of a certain homology module of a chain complex.  The relevant chain complex is a modification (to the setting of multi-arrangements) of a chain complex appearing in work of Brandt and Terao~\cite{BT94}, where it is used to study formality.  On the other hand, our analysis of this chain complex closely mirrors techniques in multivariate spline theory~\cite{SS96}.  The techniques are not isolated to the $X_3$ arrangement and can be generalized to arbitrary multi-arrangements; this is the intended subject of a forthcoming paper.

Our paper is arranged as follows.  In Section \ref{defs} we review basic definitions and enumerate the moduli of $X_3$. In section \ref{proofs} we introduce the homological machinery necessary for our classification and in section~\ref{sec:freemult} we classify all the free multiplicities on $X_3$. In section \ref{exams} we discuss free extensions of $X_3$; in particular we show that extensions of $X_3$ and its moduli satisfy Terao's conjecture.

{\bf Acknowledgements:} The authors want to thank Graham Denham for pointing out some free multiplicities on $X_3$ which was the main motivation for this project. We also want to thank Takuro Abe and Masahiko Yoshinaga for pointing out the use of the multiarrangement addition-deletion theorem with the deleted $A_3$ arrangement. The second author is partially supported by the Simons Foundation and the Office of Naval Research. 

\section{Set up}\label{defs}

In this section we first review the basic definitions of free multiplicities. Then we discuss the moduli on $X_3$.

\subsection{Derivations on multiarrangements}

The module of derivations on $S$ is $\mathrm{Der}(S)=\{ \theta \in \mathrm{Hom}(S,S)\ |\ \forall f,g\in S,\ \theta (fg)=\theta(f)g+f\theta (g)\}$ and the module of derivations on the multiarrangement $(\A,\m)$ where $\A=\{ H_i\}$, with defining linear forms $\alpha_i$, is $$D(\A,\m)=\{\theta\in \mathrm{Der}(S)\ |\ \theta (\alpha_i)\in \alpha_i^{\m(H_i)} S\}. $$
 
\begin{defn} A multiplicity $\m$ on an arrangement $\A$ is \emph{free} for $\A$ if $D(\A,\m)$ is a free module over the polynomial ring $S$.  If $\m$ is a free multiplicity we say $(\A,\m)$ is free.
\end{defn}

Write $Q(\A,\m)=\prod_{H\in\A} \alpha_H^{\m(H)}$.  We may refer to a multi-arrangement by its polynomial $Q(\A,\m)$.  The following is Saito's critierion for multi-arrangements.

\begin{prop}\cite{Z89}
Suppose $(\A,\m)$ is a multi-arrangement and $\theta_1,\ldots,\theta_\ell\in D(\A,\m)$.  Write $\theta_i=\sum_{j} \theta_{ij}\dd_j$ in the basis elements $\dd_j=\partial/\partial x_j$ and let $M=M(\theta_1,\ldots,\theta_\ell)$ be the $\ell\times\ell$ matrix with entries $\theta_{ij}$.  The following are equivalent.
\begin{enumerate}
\item $D(\A,\m)$ is a free $S$-module with basis $\theta_1,\ldots,\theta_\ell$.
\item $\det(M)=kQ(\A,\m)$ for some $k\neq 0\in\kk$.
\end{enumerate}
\end{prop}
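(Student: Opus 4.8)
The plan is to deduce the equivalence from a single divisibility statement, which I would isolate first as a lemma: for \emph{any} $\theta_1,\dots,\theta_\ell\in D(\A,\m)$ one has $Q(\A,\m)\mid\det M(\theta_1,\dots,\theta_\ell)$ in $S$. Before proving this I would record that $Q\cdot\mathrm{Der}(S)\subseteq D(\A,\m)\subseteq\mathrm{Der}(S)$ — indeed $(Q\theta)(\alpha_i)$ is divisible by $Q$, hence by $\alpha_i^{m_i}$ — so that $D(\A,\m)$ is an $S$-module of rank $\ell$. To prove the lemma, since $S$ is a UFD and the $\alpha_i$ are pairwise non-associate primes, it suffices to show $\alpha_i^{m_i}\mid\det M$ for each fixed $i$. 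I would apply a linear change of coordinates sending $\alpha_i$ to a scalar multiple of $x_1$; this multiplies $\det M$ by a nonzero constant and replaces $\alpha_i$ by an associate, so neither side of the divisibility is affected. In the new coordinates $\theta_j(x_1)=\theta_{j1}$, and membership in $D(\A,\m)$ forces $\theta_{j1}\in(x_1^{m_i})$ for all $j$; hence the first column of $M$ is divisible by $x_1^{m_i}$, and therefore so is $\det M$.

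With the lemma in hand, for $(2)\Rightarrow(1)$ I would argue by Cramer's rule. If $\det M=kQ$ with $k\in\kk\setminus\{0\}$, then $\theta_1,\dots,\theta_\ell$ are $S$-independent and generate a free submodule $F\subseteq D(\A,\m)$ which becomes all of $D(\A,\m)$ after tensoring with $\mathrm{Frac}(S)$ (both have rank $\ell$). Given $\theta\in D(\A,\m)$, write $\theta=\sum_j f_j\theta_j$ with $f_j\in\mathrm{Frac}(S)$; Cramer gives $f_j=\det(M_j)/\det(M)$, where $M_j$ is the coefficient matrix of the tuple obtained by replacing $\theta_j$ with $\theta$. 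That tuple still lies in $D(\A,\m)$, so the lemma yields $Q\mid\det(M_j)$, hence $f_j=\det(M_j)/(kQ)\in S$. Thus $\theta\in F$, so $D(\A,\m)=F$ is free on $\theta_1,\dots,\theta_\ell$.

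For $(1)\Rightarrow(2)$ I would localize at height-one primes. Assuming $D(\A,\m)$ is free on $\theta_1,\dots,\theta_\ell$, the lemma gives $\det M=gQ$ for some $g\in S$, and it remains to see that $g$ lies in no height-one prime $\pp=(\pi)$. If $\pi$ is not associate to any $\alpha_i$, then every $\alpha_i$ is a unit in $S_\pp$, so $D(\A,\m)_\pp=\mathrm{Der}(S)_\pp=S_\pp^{\,\ell}$; since $\theta_1,\dots,\theta_\ell$ is then a basis of $S_\pp^{\,\ell}$, $\det M$ is a unit of $S_\pp$, forcing $g\notin\pp$. If $\pi$ is associate to some $\alpha_i$, then (after a coordinate change making $\alpha_i=x_1$) the conditions coming from the remaining hyperplanes are vacuous over the DVR $S_{(x_1)}$, so $D(\A,\m)_{(x_1)}$ is free on $x_1^{m_i}\partial_1,\partial_2,\dots,\partial_\ell$; comparing this basis with $\theta_1,\dots,\theta_\ell$ (they differ by an element of $\mathrm{GL}_\ell(S_{(x_1)})$) shows $v_{(x_1)}(\det M)=m_i=v_{(x_1)}(Q)$, hence $v_{(x_1)}(g)=0$. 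So $g$ avoids every height-one prime of $S$ and is therefore a unit, i.e.\ $\det M=gQ$ with $g\in\kk\setminus\{0\}$.

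I expect the genuine content to be concentrated in the divisibility lemma and in the local structure of $D(\A,\m)$ at the primes $(\alpha_i)$ — these are the only places the defining condition of $D(\A,\m)$ is really exploited, and the coordinate-change bookkeeping (that $\det M$ is merely rescaled, that the transformed arrangement carries the same multiplicities) is the fiddly part to get right. Everything else is formal: independence from $\det M\neq 0$, Cramer's rule, and the recovery of a module from its localizations. (An alternative for $(1)\Rightarrow(2)$ is to identify the ideal $(\det M)$ with the zeroth Fitting ideal of $\mathrm{Der}(S)/D(\A,\m)$ and compute its associated divisor as $\sum_i m_i[V(\alpha_i)]$, but the explicit DVR computation above seems the most self-contained.)
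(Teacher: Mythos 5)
The paper gives no proof of this proposition: it is quoted verbatim as Ziegler's multi-arrangement version of Saito's criterion, with only the citation \cite{Z89}, so there is no in-paper argument to compare against. Your proof is correct and is essentially the standard one: the divisibility lemma $Q(\A,\m)\mid\det M$ (via a coordinate change making $\alpha_i$ a coordinate, so that the relevant column of $M$ is divisible by $\alpha_i^{m_i}$), Cramer's rule for $(2)\Rightarrow(1)$, and for $(1)\Rightarrow(2)$ the localization at height-one primes, where the only nontrivial case is $\pp=(\alpha_i)$ and the explicit basis $\alpha_i^{m_i}\partial_1,\partial_2,\dots,\partial_\ell$ of $D(\A,\m)_\pp$ pins down the $\alpha_i$-adic valuation of $\det M$ as exactly $m_i$. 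The only points worth being careful about, and which you do handle, are that $\det M\neq 0$ for a basis (via $Q\cdot\mathrm{Der}(S)\subseteq D(\A,\m)$ giving rank $\ell$), that the tuple obtained by swapping in an arbitrary $\theta\in D(\A,\m)$ still satisfies the divisibility lemma, and that localization commutes with the finite intersection of conditions defining $D(\A,\m)$.
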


If $\A\subset V\cong\kk^\ell$ is an arrangement and $H\in\A$ a hyperplane, the restriction $\A^H$ is the hyperplane arrangement in $H\cong \kk^{\ell-1}$ with hyperplanes $\{H\cap H': H'\in\A\}$.  The \textit{Zeigler multiplicity} on $\A^H$ is $\m(H')=\#\{H''\in\A: H''\cap H=H'\cap H\}$ and the Ziegler multi-restriction of $\A$ to $H\in\A$ is the multi-arrangement $(\A^H,\m)$ where $\m$ is the Ziegler multiplicity.  If $\A$ is a central arrangement, denote by $L$ its lattice of flats; this is the poset of all intersections of hyperplanes of $\A$ ordered with respect to reverse inclusion.  If $X\in L$ then $\A_X$ is the arrangement consisting of all hyperplanes containing $X$.  The following criterion is due to Yoshinaga~\cite{Y04}.

\begin{thm}\label{thm:Yosh}
Suppose $\A$ is a central arrangement with maximal flat $c$ and $H\in\A$.  Then $\A$ is free if and only if
\begin{enumerate}
\item The Ziegler multi-restriction $(\A^H,\m)$ is free and
\item $\A_X$ is free for every $X\neq c\in L$ with $X>H$.
\end{enumerate}
\end{thm}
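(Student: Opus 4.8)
The plan is to recast freeness of $\A$ as the splitting of a vector bundle on $\PP^{\ell-1}$, and then to extract that splitting from the two hypotheses by way of Horrocks' criterion. First I would dispose of the Euler direction: since $\A$ is central, $\theta_E=\sum_i x_i\dd_i\in D(\A)$, and for the chosen $H$ one gets a decomposition $D(\A)=S\theta_E\oplus D_0(\A)$ with $D_0(\A)=\{\theta\in D(\A):\theta(\alpha_H)=0\}$, so that $\A$ is free if and only if $D_0(\A)$ is a free $S$-module. Now $D_0(\A)$, like $D(\A)$, is a graded reflexive $S$-module (an intersection of reflexive submodules of $\Der(S)\cong S^\ell$), hence $\depth_\mm D_0(\A)\ge2$ and $D_0(\A)=\bigoplus_{j\in\ZZ}H^0(\PP^{\ell-1},\F(j))$, where $\PP^{\ell-1}=\PP(V)$ and $\F=\wt{D_0(\A)}$ is the associated coherent sheaf. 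Consequently $\A$ is free precisely when $\F$ is a finite direct sum of line bundles $\bigoplus_i\OO_{\PP^{\ell-1}}(-d_i)$.

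Next I would use (2) to see that $\F$ is a vector bundle. The stalk of $\F$ at a point $[v]\in\PP^{\ell-1}$ depends only on the localization $\A_X$ at the flat $X=\bigcap_{v\in H'}H'$ through $v$, since $D(\A)$ localizes there to $D(\A_X)$; hence $\F$ is locally free at $[v]$ exactly when $\A_X$ is free. Localizations at flats of rank at most two are automatically free and localizations of free arrangements are free, so hypothesis (2) is exactly what is needed to conclude that $\F$ is locally free on all of $\PP^{\ell-1}$.

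The substance of the proof is condition (1). Applying the same reflexivity reasoning to the rank-$(\ell-1)$ multi-arrangement $(\A^H,\m)$, its derivation module sheafifies to a bundle $\G$ on the hyperplane $\PP(H)\cong\PP^{\ell-2}$, and (1) says $\G\cong\bigoplus_i\OO_{\PP^{\ell-2}}(-d_i)$. The Ziegler restriction map $D_0(\A)\to D(\A^H,\m)$ — whose kernel is $\alpha_H D_0(\A)$ and whose cokernel has finite length — then sheafifies to a short exact sequence of sheaves on $\PP^{\ell-1}$
\[
0\longrightarrow\F(-1)\xrightarrow{\;\alpha_H\;}\F\longrightarrow\G\longrightarrow0,
\]
with $\G$ supported on $\PP(H)$. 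Passing to the associated long exact sequence of graded sheaf cohomology and using the vanishing of the intermediate cohomology of a split bundle on $\PP^{\ell-2}$, I would obtain that multiplication by $\alpha_H$ acts injectively or surjectively in every degree on $\bigoplus_j H^i(\PP^{\ell-1},\F(j))$ for each $1\le i\le\ell-2$; since these graded modules are finite-dimensional over $\kk$ (by Serre vanishing together with Serre duality, where $\ell\ge4$ is used), they must all vanish. Horrocks' splitting criterion — legitimate precisely because (2) made $\F$ a vector bundle — then forces $\F\cong\bigoplus_i\OO_{\PP^{\ell-1}}(-d_i)$, and hence $D_0(\A)$ and $\A$ are free. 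The converse is comparatively soft: if $\A$ is free then $\F$ splits, so it is locally free and all localizations $\A_X$ are free, which gives (2); and restricting the displayed sequence to $\PP(H)$ identifies $\G$ with $\F|_{\PP(H)}$, a split bundle, which together with reflexivity of $D(\A^H,\m)$ yields (1) — this implication is essentially Ziegler's theorem.

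I expect the principal obstacle to be producing the Ziegler restriction sequence in exactly this form — in particular, checking that the kernel of the restriction map is $\alpha_H D_0(\A)$ and, more delicately, that its cokernel is a finite-length module so that it disappears after sheafification — hand in hand with the graded Nakayama argument that turns the long exact sequence into genuine cohomology vanishing. One must also be careful that (2) is precisely the hypothesis making $\F$ locally free, since Horrocks' criterion is false for reflexive sheaves that are not vector bundles. A more recent alternative would replace the Horrocks step by an induction on $|\A|$ using Abe's addition--deletion theorems for multi-arrangements.
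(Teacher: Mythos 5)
The paper does not prove this statement---it is quoted from Yoshinaga's 2004 paper \cite{Y04}---so I am measuring your attempt against the argument in the literature. Your skeleton (Euler splitting $D(\A)=S\theta_E\oplus D_0(\A)$, reflexivity giving $D_0(\A)=\Gamma_*(\F)$, the Ziegler sequence $0\to\F(-1)\xrightarrow{\alpha_H}\F\to\G\to0$, and cohomology vanishing forced by freeness of $(\A^H,\m)$) is genuinely the right one, and you correctly spotted that $\ell\ge 4$ is needed (the paper suppresses this; the rank-$3$ version is the different criterion of \cite{Y05} used in Lemma~\ref{lem:LocalAlongW}). But there is a real gap at the step where you feed $\F$ into Horrocks' criterion: you assert that hypothesis (2) ``is exactly what is needed to conclude that $\F$ is locally free on all of $\PP^{\ell-1}$.'' It is not. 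Condition (2) concerns only the flats $X>H$, i.e.\ flats contained in $H$, hence only the points of $\PP(H)\subset\PP^{\ell-1}$. At a point $[v]$ with $v\notin H$ the relevant localization $\A_{X_v}$ is not covered by (2) and can a priori be a non-free arrangement of rank $\ge 3$ (possible as soon as $\ell\ge 4$), so $\F$ need not be a vector bundle where you need it. That all localizations are free is a \emph{consequence} of the theorem, not something you may assume; as written, both the Horrocks step and your Serre-duality argument for finite-dimensionality of $H^i_*(\F)$ for $2\le i\le\ell-2$ rest on this unjustified claim.

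The repair is to notice that you never needed the full strength of Horrocks. Only $H^1_*(\F)\cong H^2_{\mm}(D_0(\A))$ has to die: its vanishing makes the Ziegler restriction map $\rho\colon D_0(\A)\to D(\A^H,\m)$ surjective (its cokernel is identified, once (2) forces it to have finite length, with the kernel of $\cdot\alpha_H$ on $H^2_{\mm}(D_0(\A))$), and then $D_0(\A)/\alpha_H D_0(\A)\cong D(\A^H,\m)$ is free over $S/(\alpha_H)$, so $D_0(\A)$ is free over $S$ because $\alpha_H$ is a nonzerodivisor on it and projective dimension is preserved by going modulo a regular element---no splitting criterion required. Crucially, $H^2_{\mm}(D_0(\A))$ has finite length \emph{automatically}: $D_0(\A)$ is reflexive, hence has depth $\ge 2$ at every prime of height $\ge 2$, and Grothendieck's finiteness theorem then bounds $H^i_{\mm}$ for $i\le 2$ with no local-freeness hypothesis off of $H$. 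Freeness of $(\A^H,\m)$ gives $H^2_{\mm}(D(\A^H,\m))=0$ (this is where $\ell\ge4$ enters), hence $\cdot\alpha_H$ is surjective on the finite-length graded module $H^2_{\mm}(D_0(\A))$ in every degree, which kills it. This is essentially Yoshinaga's proof; your version cannot be salvaged without replacing the Horrocks step by something like this.
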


\subsection{Moduli of $X_3$}

Consider the coefficient matrix $M=(a_{ij})$ where $H_i=\{a_{i1}x+a_{i2}y+a_{i3}z=0\}$ for $1\leq i\leq 6$. Since hyperplanes $H_1$, $H_2$, $H_5$ and $H_6$ form a generic subarrangement of $X_3$ we can fix coordinates so that the coefficient matrix is of the form $$M=\left[ \begin{array}{ccc}
1&0 &0\\
0&1&0\\
a_{31}&a_{32}&a_{33}\\
a_{41}&a_{42}&a_{43}\\
0&0&1\\
1&1&1\\
\end{array}\right]$$ Then we get the following consequences of the three triple points:

\begin{enumerate}

\item The triple point $\{H_1,H_3,H_5\}$ gives that $a_{32}=0$. 

\item The triple point $\{H_2,H_3,H_6\}$ gives that $a_{31}=a_{33}$.

\item The triple point $\{H_1,H_2,H_4\}$ gives that $a_{43}=0$. 
\end{enumerate} Since $a_{31}\neq 0$ and $a_{41}\neq 0$ (otherwise we would have more dependences than are combinatorially allowed for $X_3$) we can scale the rows so that the coefficient matrix is $$M=\left[ \begin{array}{ccc}
1&0 &0\\
0&1&0\\
1&0&1\\
1&a_{42}&0\\
0&0&1\\
1&1&1\\
\end{array}\right] .$$ Since we do not have any more multiple intersection points this is as far as we can reduce $M$. Hence the dimension of the moduli space of $X_3$ is 1. Moreover, by examining all other possible determinants of $M$ we see that as long as $a_{42}\neq 0, 1$ the corresponding matroid is isomorphic to that of $X_3$. If $a_{42}=0$ then $H_1=H_4$. If $a_{42}=1$ then we get a new dependency which makes the arrangement lattice equivalent to the braid arrangement. 

Again changing coordinates and using $\alpha\neq 0,1$ for the one dimensional moduli we will use the following coefficient matrix for the remainder of the paper $$M=\left[ \begin{array}{ccc}
1&0 &0\\
0&1&0\\
0&0&1\\
1&-\alpha&0\\
1&0&1\\
0&1&1\\
\end{array}\right] $$ which can be viewed as tilting the ``diagonal line'' in Figure \ref{pic2}.

\begin{figure}

\begin{tikzpicture}[scale=.6]

\draw[thick] (0,0) circle (4cm);
\draw[thick] (-3.8,0)--(3.8,0);
\draw[thick] (0,-3.8)--(0,3.8);
\draw[thick, red] (-3.5,1.5)--(3.5,-1.5);
\draw[thick] (-1,3.3)--(-1,-3.3);
\draw[thick] (3.3,-1)--(-3.3,-1);
\node at (.5,3.5) {$m_1$};
\node at (3.5,.5) {$m_2$};
\node at (3.1,3.1) {$m_3$};
\node at (2.3,-1.8) {$m_4$};
\node at (-1.5,-3) {$m_5$};
\node at (.-3,-1.5) {$m_6$};

\draw[->,red,thick] (-1.2,2.4) to [out=180, in=70] (-2.7,1.3);
\node[red] at (-2,1.8) {$\alpha$};

\end{tikzpicture}
\caption{The $X_3$ arrangement with the moduli emphasized}\label{pic2}
\end{figure}
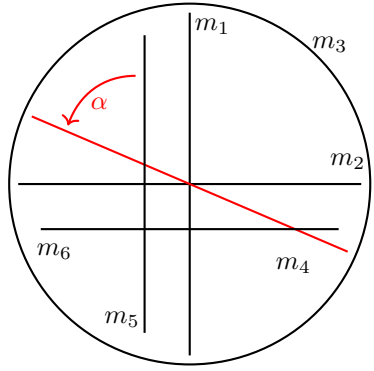


\begin{exm}[Freeness of multi-arrangements is not combinatorial]\label{ex:FreeNotComb}
Consider the multiplicity $\m=[3,3,3,1,1,1]$ on the moduli of the $X_3$ arrangement over the complex numbers, with corresponding multi-arrangement defined by $x^3y^3z^3(x-\alpha y)(x+z)(y+z)$.  The multi-arrangement defined thus is free if and only if $\alpha\neq -1$, which shows that freeness of multi-arrangements is not a combinatorial property.

We can prove this claim using addition-deletion techniques for multi-arrangements from~\cite{ATW08} (if the reader is not familiar with these techniques, feel free to skip the following argument).  First, consider the deletion $(\A,\m')$ with defining polynomial $x^3y^3z^3(x+z)(y+z)$.  Then $\A$ is supersolvable with filtration $\A=\A_3\supset\A_2\supset\A_1$, where $\A_1,\A_2,\A_3$ have defining polynomials $x$,$xz(x+z)$, and $xyz(x+z)(y+z)$, respectively.  We can check that $(\A,\m')$ is free with exponents $(3,4,4)$ by~\cite[Theorem~5.10]{ATW08}.  The Euler restriction (see~\cite{ATW08}) of $(X_3,\m)$ to $V(x)=H_1$ is the multi-arrangement $x^3z^3(x+z)(x+\alpha z)$ (some care must be taken to see that $3$ is the proper exponent on $x$, but this follows from~\cite[Proposition~4.1]{ATW08}).  Ziegler considered the multi-arrangement $x^3z^3(x+z)(x+\alpha z)$ to show that exponents of multi-arrangements are not combinatorial~\cite[Proposition~10]{Z89}; he showed that the exponents are $(3,5)$ if $\alpha=-1$ and $(4,4)$ if $\alpha\neq -1$.  It then follows from the addition theorem~\cite[Theorem~0.7]{ATW08} that the multi-arrangement $x^3y^3z^3(x-\alpha y)(x+z)(y+z)$ is free if $\alpha\neq -1$ and from the restriction theorem~\cite[Theorem~0.6]{ATW08} that it is not free when $\alpha=-1$.
\end{exm}


\begin{remark}\label{rem:P1}
A significant obstruction to using addition-deletion techniques from~\cite{ATW08} to classify all free multiplicities on the moduli of $X_3$ is that exponents for multi-arrangements of four points in $\mathbb{P}^1$ are largely unknown.  If the multiplicity vector $\m$ of an arrangement $\A$ of points in $\mathbb{P}^1$ satisfies certain inequalities and the points in $\A$ are `generic' then the second author and Yuzvinsky show that the exponents of $(\A,\m)$ are $(\lfloor |\m|/2\rfloor,\lceil |\m|/2 \rceil)$~\cite{WY07}.  However it is not easy to determine what makes the arrangement of points in $\mathbb{P}^1$ `generic'; it depends in particular on the multiplicity vector $\m$. 

\end{remark}

\section{Homological characterization of freeness}\label{proofs}

In this section we prove that freeness of $(D(X_3),\m)$ can be characterized by the vanishing of a certain homology of a chain complex.  We devote this section to a careful description of the chain complex and a proof that the vanishing of this particular homology is both necessary and sufficient for freeness of $D(X_3,\m)$.  The techniques are close in spirit to~\cite{SS96}, where it is shown that vanishing homologies of a chain complex determine freeness of the module of splines.

We will denote by $L=L_{X_3}$ the intersection lattice of $X_3$, and we denote by $L_c$ intersections of codimension (or rank) $c$.  The $X_3$ arrangement has three triple points $Y_1,Y_2,$ and $Y_3$ given as intersections of $\{H_1,H_2,H_4\},\{H_1,H_3,H_5\},$ and $\{H_2,H_3,H_6\}$, respectively.  Let $\lt=\{Y_1,Y_2,Y_3\}$ be the set of triple points.  For every hyperplane $H_i$ in $X_3$ set $J(H_i)=\alpha_i^{\m(H_i)}S$.  Furthermore, for a flat $Y\in L$ of rank two let $J(Y)=\sum_{Y\subset H} J(H)$.  

\subsection{Hilbert-Burch resolutions}\label{ss:HB} For later use, we will need to have a good understanding of the syzygies of the ideals $J(Y_i)$, which we write explicitly below:
\[
\begin{array}{c}
J(Y_1)=\langle x^{m_1},y^{m_2},(x-\alpha y)^{m_4}\rangle\\
J(Y_2)=\langle x^{m_1},z^{m_3},(x+z)^{m_5}\rangle\\
J(Y_3)=\langle y^{m_2},z^{m_3},(y+z)^{m_6}\rangle.
\end{array}
\]
Each of these ideals is visibly codimension two (being supported at the relevant triple point in $\mathbb{P}^2$) and is Cohen-Macaulay.  As such, each ideal $J(Y_i)$ has a \textit{Hilbert-Burch resolution} (see~\cite[Theorem~3.2]{E05}) of the form
\[
0\rightarrow S^2\xrightarrow{\phi_i} S^3 \rightarrow J(Y_i)\rightarrow 0,
\]
where $\phi_i$ is a $2\times 3$ matrix of forms whose $2\times 2$ minors are (up to multiplication by a non-zero constant) the given generators of $J(Y_i)$.  The columns of $\phi_i$ are \textit{syzygies} on the generators of $J(Y_i)$.  We will write
\[
\phi_1=
\begin{bmatrix}
A_1 & B_1\\
A_2 & B_2\\
A_3 & B_3
\end{bmatrix}
\qquad
\phi_2=
\begin{bmatrix}
C_1 & D_1\\
C_2 & D_2\\
C_3 & D_3
\end{bmatrix}
\qquad
\phi_3=
\begin{bmatrix}
E_1 & F_1\\
E_2 & F_2\\
E_3 & F_3
\end{bmatrix}.
\]
To make the meaning of these matrices clearer, consider the ideal $J(Y_1)=\langle x^{m_1},y^{m_2},(x-\alpha y)^{m_4}\rangle$.  Then the columns of $\phi_1$ express the relations $A_ix^{m_1}+B_iy^{m_2}+C_i(x-\alpha y)^{m_4}=0$ for $i=1,2$ (notice the relative order of the generators is important).  Since these are matrices for Hilbert-Burch resolutions, we also have (up to multiplication by a non-zero constant)
\[
A_1B_2-B_1A_2=(x-\alpha y)^{m_4}, A_1B_3-B_1A_3=y^{m_2}, A_2B_3-B_2A_3=x^{m_1},\mbox{etc.}
\] 
It is standard practice in commutative algebra to omit redundant generators from the generating set.  However, it is important for our analysis that we fix the given generators for the ideals $J(Y_i)$.  We illustrate with two simple examples.
\begin{exm}\label{ex:HB}
Suppose $m_1=m_2=2$ and $m_4=1$, so $J(Y_1)=\langle x^2,y^2,(x-\alpha y)\rangle$.  Clearly either $x^2$ or $y^2$ is an extraneous generator.  There are several possible choices for what $\phi_1$ will look like, depending on what syzygies are chosen on $J(Y_1)$.  We make the following choice:
\[
\phi_1=
\begin{bmatrix}
1 & 0\\
-\alpha^2 & -(x-\alpha y)\\
-(x+\alpha y) & y^2
\end{bmatrix},
\]
which expresses the fact that syzygies on $J(Y_1)$ can be generated by the Koszul syzygy between $y^2$ and $(x+y)$, followed by the expression of $x^2$ in terms of $y^2$ and $x-\alpha y$.  Notice that the $2\times 2$ minors are $-(x-\alpha y),y^2,$ and $-x^2$.  More generally, suppose $J(Y_1)$ is minimally generated by two of the three forms $x^{m_1},y^{m_2},(x-\alpha y)^{m_4}$; without loss suppose $y^{m_2}$ and $(x-\alpha y)^{m_4}$ generate $J(Y_1)$.  Then $\phi_1$ has the form
\[
\phi_1=
\begin{bmatrix}
1 & 0\\
-F & -(x-\alpha y)^{m_4}\\
-G & y^{m_2}
\end{bmatrix},
\]
where $F$ and $G$ are polynomials so that $Fy^{m_2}+G(x-\alpha y)^{m_4}=x^{m_1}$.

Now suppose instead that $m_1=m_2=m_4=2$, so $J(Y_1)=\langle x^2,y^2,(x-\alpha y)^2\rangle$.  This time none of the generators are redundant.  Then one choice for $\phi_1$ is:
\[
\phi_1=
\begin{bmatrix}
x-2\alpha y & y\\
\alpha^2x & -2\alpha x+\alpha^2y\\
-x & -y
\end{bmatrix}.
\]
Notice that the $2\times 2$ minors are $-2\alpha(x+y)^2,2\alpha y^2,$ and $-2\alpha x^2$.
\end{exm}

\subsection{The chain complex}
We will now assemble the promised chain complex whose homologies will tell us about freeness of the multi-arrangement $(X_3,\m)$.  First consider the chain complex $\cS=\cS_0\xrightarrow{\delta^0}\cS_1\xrightarrow{\delta^1}\cS_2$ with modules
\[
\cS_0\cong S^3 \qquad \cS_1\cong S^6 \qquad \cS_2\cong S^3.
\]
We will consider taking homologies beginning at the left-hand side, so will denote homologies as \textit{co}homologies.  The free module $\cS_0$ has a basis in correspondence with the variables $x,y,z$, $\cS_1$ has basis in correspondence with hyperplanes (codimension one flats), and $\cS_2$ has basis in correspondence with the triple points of $X_3$.  The maps $\delta^i:\cS_{i}\rightarrow \cS_{i+1}$ are given by the matrices
\[
\delta^0=\bordermatrix{ & x & y & z\cr 
H_1 & 1 & 0 & 0 \cr
H_2 & 0 & 1 & 0 \cr
H_3 & 0 & 0 & 1 \cr
H_4 & 1 & -\alpha & 0 \cr
H_5 & 1 & 0 & 1 \cr
H_6 & 0 & 1 & 1}
\qquad
\delta^1=\bordermatrix{ & H_1 & H_2 & H_3 & H_4 & H_5 & H_6 \cr
Y_1 & 1 & -\alpha & 0 & -1 & 0 & 0 \cr
Y_2 & 1 & 0 & 1 & 0 & -1 & 0 \cr
Y_3 & 0 & 1 & 1 & 0 & 0 & -1
}.
\] 
Note that $\delta^0$ is the coefficient matrix of forms defining $X_3$ and $\delta^1$ is the matrix of relations around triple points. It is readily checked that the chain complex $\cS$ is exact.  In fact, it can be checked that the homologies of $\cS$ govern when $X_3$ is $2$-formal in the sense of Brandt and Terao~\cite{BT94} (see also~\cite{T07}); as a consequence we see that $X_3$ is $2$-formal.

Now define the sub-complex $\J=\J_1\xrightarrow{\delta^1}\J_2$ of $\cS$ with modules
\[
\J_1=\bigoplus\limits_{H\in L_1} J(H)\subset \cS_1\qquad \J_2=\bigoplus\limits_{Y\in \lt}J(Y)\subset \cS_2.
\]
The quotient complex $\cS/\J$ has the form
\[
S^3\xrightarrow{\bar{\delta}^0} \bigoplus\limits_{H\in L_1} \dfrac{S}{J(H)} \xrightarrow{\bar{\delta}^1}  \bigoplus\limits_{Y\in \lt}\dfrac{S}{J(Y)},
\]
where $\bar{\delta}^0,\bar{\delta}^1$ are the quotient maps.  Notice that $D(X_3,\m)$ is the kernel of the map $\bar{\delta}^0$, in other words $D(X_3,\m)\cong H^0(\cS/\J)$.

\begin{remark}
The modules of the chain complex $\cS/\J$ are a natural extension to multi-arrangements of the modules $\bigoplus_{Y\in L_c} D_c(\A_Y)$ which appear in the paper of Brandt and Terao~\cite{BT94}.
\end{remark}

The main point of the chain complex $\cS/\J$ is that its homologies control freeness of $D(X_3,\m)$.  First, we notice that we may consider instead homologies of the complex $\J$.

\begin{prop}\label{prop:HomologySwitch}
With the chain complexes $\J,\cS,\cS/\J$ as above, $D(X_3,\m)\cong H^0(\cS/\J)\cong H^1(\J)$, $H^1(\cS/\J)\cong H^2(\J)$, and $H^2(\cS/\J)=0$.
\end{prop}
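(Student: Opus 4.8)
The plan is to deduce all three statements at once from the long exact sequence in cohomology attached to the short exact sequence of cochain complexes
\[
0 \longrightarrow \J \longrightarrow \cS \longrightarrow \cS/\J \longrightarrow 0,
\]
together with the fact, already observed above, that $\cS$ is exact.

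First I would pin down the bookkeeping. The complex $\cS$ sits in cohomological degrees $0,1,2$; the subcomplex $\J$ sits only in degrees $1$ and $2$ (so $\J_0=0$); and $\cS/\J$ again sits in degrees $0,1,2$, with $(\cS/\J)_0=\cS_0$. The inclusion $\J\hookrightarrow\cS$ and the projection $\cS\twoheadrightarrow\cS/\J$ commute with the differentials, since in each degree these are the maps $\delta^0,\delta^1$ and their restrictions or quotients, so the displayed sequence is genuinely a short exact sequence of complexes. Exactness of $\cS$ says $H^i(\cS)=0$ for all $i$, and $\J_0=0$ gives $H^0(\J)=0$ while $\J_3=0$ gives $H^3(\J)=0$.

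Now I would read off three segments of the associated long exact sequence. From $H^0(\cS)\to H^0(\cS/\J)\to H^1(\J)\to H^1(\cS)$, i.e. $0\to H^0(\cS/\J)\to H^1(\J)\to 0$, we obtain $H^0(\cS/\J)\cong H^1(\J)$. From $H^1(\cS)\to H^1(\cS/\J)\to H^2(\J)\to H^2(\cS)$, i.e. $0\to H^1(\cS/\J)\to H^2(\J)\to 0$, we obtain $H^1(\cS/\J)\cong H^2(\J)$. Finally, the tail of the long exact sequence reads $H^2(\cS)\to H^2(\cS/\J)\to H^3(\J)$, i.e. $0\to H^2(\cS/\J)\to 0$, so $H^2(\cS/\J)=0$; equivalently, $H^2(\cS/\J)=\operatorname{coker}\bar{\delta}^1=0$ directly, because $\delta^1$ is surjective (exactness of $\cS$ at $\cS_2$) and a quotient of a surjection is a surjection. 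Since $D(X_3,\m)\cong H^0(\cS/\J)$ was noted immediately before the statement, stringing these together yields the full chain $D(X_3,\m)\cong H^0(\cS/\J)\cong H^1(\J)$, $H^1(\cS/\J)\cong H^2(\J)$, and $H^2(\cS/\J)=0$.

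I do not expect a genuine obstacle here: the substantive input is the exactness of $\cS$ (equivalently, the $2$-formality of $X_3$), which has already been absorbed, so what remains is purely homological bookkeeping. The one point that must be kept straight — and the source of the asymmetry in the indices — is that $\J$ is supported in degrees $1$ and $2$ only; this is precisely why $D(X_3,\m)$ appears as the \emph{zeroth} cohomology of the quotient while it is the \emph{first} cohomology of $\J$, and why the top cohomology of the quotient is forced to vanish.
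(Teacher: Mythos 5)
Your proposal is correct and is essentially identical to the paper's proof, which likewise invokes the long exact sequence attached to $0\rightarrow \J\rightarrow \cS\rightarrow \cS/\J\rightarrow 0$, the exactness of $\cS$, and the identification $D(X_3,\m)\cong H^0(\cS/\J)$. Your additional bookkeeping about $\J$ being supported only in degrees $1$ and $2$ is a correct elaboration of details the paper leaves implicit.
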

\begin{proof}
This follows directly from the long exact sequence in homology derived from the short exact sequence of complexes $0\rightarrow \J\rightarrow \cS\rightarrow \cS/\J \rightarrow 0$, the fact that the homologies of $\cS$ vanish, and the fact that $D(X_3,\m)\cong H^0(\cS/\J)$.
\end{proof}

The main result of this section is the following proposition.
\begin{prop}\label{prop:VanishingHomologies}
$D(X_3,\m)$ is a free $S$-module if and only if $H^2(\J)=0$.	
\end{prop}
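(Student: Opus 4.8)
The plan is to exploit the structure we have already built: $D(X_3,\m)\cong H^1(\J)$ by Proposition~\ref{prop:HomologySwitch}, the complex $\J=\J_1\xrightarrow{\delta^1}\J_2$ has only two terms, and each summand $J(Y_i)$ of $\J_2$ is a codimension-two Cohen--Macaulay ideal with an explicit Hilbert--Burch resolution $0\to S^2\xrightarrow{\phi_i}S^3\to J(Y_i)\to 0$. So I would first assemble the $\phi_i$ into a single map $\Phi\colon S^6\to\J_2$ presenting $\J_2$, and then form the pullback: the map $\delta^1\colon\J_1\to\J_2$ lifts (since $\J_1$ is free) through $\Phi$ to a map $\psi\colon\J_1\to S^6$, and $H^1(\J)=\J_2/\operatorname{im}\delta^1$ while $H^2(\J)=\ker\delta^1$. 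The key homological observation is that $\ker\delta^1$ sits in a short exact sequence with $\ker\Phi\cong S^2\oplus S^2\oplus S^2$ (the second syzygies, i.e.\ the images of the $\phi_i$) and the module $\operatorname{im}\psi + (\text{the }\phi_i\text{-columns})\subset S^6$; more usefully, splicing the Hilbert--Burch resolutions against $\J_1\xrightarrow{\delta^1}\J_2$ produces a complex $0\to S^6\to S^6\oplus\J_1\to\J_2\to 0$ (or a truncation thereof) whose homology records both $H^1(\J)$ and $H^2(\J)$ simultaneously.

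The heart of the argument is a rank/determinant count. Both $H^1(\J)$ and $H^2(\J)$ are supported in codimension $\geq 2$ (they vanish after localizing at any height-one prime, since away from the triple points each $J(Y_i)$ is the unit ideal or principal and the local picture degenerates), so $H^2(\J)=\ker\delta^1$ is a second-syzygy-type module that is either zero or has codimension two; in particular $H^2(\J)=0$ iff $\delta^1$ is injective. The idea is then to compare $D(X_3,\m)=H^1(\J)$ with $H^2(\J)$ via an Euler-characteristic (alternating-sum of Hilbert series) argument exactly as in spline theory~\cite{SS96}: from the complex $\J$ one gets
\[
\HS(H^1(\J),t)-\HS(H^2(\J),t)=\HS(\J_2,t)-\HS(\J_1,t),
\]
and the right-hand side is computed explicitly from the Hilbert--Burch resolutions of the $J(Y_i)$ and the (free) module $\J_1=\bigoplus J(H_i)$. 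This identity shows $D(X_3,\m)$ always has the Hilbert series it "should" have for a free module (with a predicted exponent vector), up to the correction term $\HS(H^2(\J),t)$. Then: if $H^2(\J)=0$, I would use Saito's criterion (the Proposition of~\cite{Z89} quoted above) — explicitly, a basis of $H^1(\J)\cong D(X_3,\m)$ can be produced from the three syzygy matrices together with the Euler derivation, and the determinant of the resulting $3\times 3$ matrix is forced (by the Hilbert-series computation, which pins down the top-degree behavior, together with the fact that the determinant must vanish on each hyperplane to the correct order) to equal $kQ(X_3,\m)$ for some nonzero constant $k$. Conversely, if $D(X_3,\m)$ is free, then it is a third syzygy hence reflexive, and one chases the long exact sequence / the projective dimension: freeness of $H^0(\cS/\J)$ forces $\pdim$ of the cokernel complex to drop, which kills $H^2(\J)=H^1(\cS/\J)$.

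The main obstacle I expect is the converse direction and, within the forward direction, the explicit verification that the candidate determinant is a nonzero multiple of $Q(X_3,\m)$ rather than merely having the right degree and vanishing loci — one must rule out that the determinant is identically zero, i.e.\ that the constructed derivations are genuinely independent. This is where the precise bookkeeping with the Hilbert--Burch matrices $\phi_i$ (and the insistence, emphasized in Section~\ref{ss:HB}, on keeping the \emph{given} non-minimal generators) pays off: the off-diagonal blocks of the big matrix are controlled by the entries $A_i,B_i,\dots,F_i$, and a localization argument at each triple point $Y_j$ (where only the ideal $J(Y_j)$ is nontrivial) reduces the non-vanishing of the determinant to the non-vanishing of the relevant $2\times 2$ minors of $\phi_j$, which are exactly the generators $x^{m_1},y^{m_2},(x-\alpha y)^{m_4}$, etc. For the converse, the cleanest route is homological: combine Proposition~\ref{prop:HomologySwitch} with the fact that $D(X_3,\m)$ free implies $\depth D(X_3,\m)=\ell=3$, feed this through the long exact sequence relating $H^\bullet(\J)$, $H^\bullet(\cS)=0$, and $H^\bullet(\cS/\J)$ together with a depth/projective-dimension estimate on the (explicitly resolved) modules $S/J(Y_i)$ and $S/J(H_i)$, and conclude $H^2(\J)=0$ by an Auslander--Buchsbaum count. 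I would structure the write-up so that the forward direction is the Saito-criterion computation and the reverse direction is the short homological-algebra argument, with the Hilbert-series identity stated once and used in both.
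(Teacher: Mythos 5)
There are genuine gaps here, the most serious being an indexing error that breaks your main structural claim. With $\J=\J_1\xrightarrow{\delta^1}\J_2$, the four-term sequence is $0\to\ker\delta^1\to\J_1\to\J_2\to\operatorname{coker}\delta^1\to 0$, so $H^1(\J)=\ker\delta^1\cong D(X_3,\m)$ and $H^2(\J)=\operatorname{coker}\delta^1$ --- exactly the opposite of what you wrote. Consequently your assertion that ``$H^2(\J)=0$ iff $\delta^1$ is injective'' is false: $\ker\delta^1$ is the rank-three module $D(X_3,\m)$ and is never zero, and the statement you would end up proving (``$D(X_3,\m)$ is free iff $\delta^1$ is injective'') is not the proposition. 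Second, your support claim is too weak and essentially unproved: you assert $H^2(\J)$ has codimension $\geq 2$ by a remark about height-one primes, but the argument needs $H^2(\J)\cong H^1(\cS/\J)$ to be supported \emph{only at} $\frakm$ (finite length), and the only nontrivial check is at the codimension-two primes of the triple points $Y\in\lt$, where one must verify that $\operatorname{coker}((\bar{\delta}^0)_P)\cong (S/J(Y))_P$; you never address this case. Codimension $\geq 2$ alone is not enough for either your Auslander--Buchsbaum count in the converse or for the duality step that converts vanishing of an $\operatorname{Ext}$ module into vanishing of $H^2(\J)$.

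Third, your forward direction (``$H^2(\J)=0\Rightarrow D(X_3,\m)$ free'') is not a proof but a program: you propose to build an explicit candidate basis from the Hilbert--Burch matrices and the Euler derivation and verify Saito's criterion, and you yourself flag that the nonvanishing of the resulting determinant is the obstacle. No mechanism is given by which the hypothesis $H^2(\J)=0$ forces that determinant to be a nonzero constant; in effect you are trying to prove the classification theorem (the content of Theorem~\ref{thm:X3Classification}) inside this proposition. The paper's route avoids all of this: splitting the four-term sequence into two short exact sequences and running the long exact sequence in $\operatorname{Ext}$ (using $\pdim J(Y_i)\leq 1$ and $\J_1$ free) gives $\operatorname{Ext}^3_S(H^2(\J),S)\cong\operatorname{Ext}^1_S(D(X_3,\m),S)$ in one stroke; since $D(X_3,\m)$ is a second syzygy over a three-dimensional ring, it is free iff $\operatorname{Ext}^1_S(D(X_3,\m),S)=0$, and since $H^2(\J)$ has finite length, $\operatorname{Ext}^3_S(H^2(\J),S)=0$ iff $H^2(\J)=0$. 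That single chain of isomorphisms handles both implications simultaneously; your Hilbert-series identity (which also has its right-hand side with the wrong sign) is not needed.
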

\begin{proof}
Let $\frakm=\langle x,y,z\rangle$ be the maximal ideal of $S$.  First we claim that if $H^2(\J)$ is nonzero, then it is only supported at $\frakm$ (hence has finite length).  It is easiest to show this using $H^1(\cS/\J)$; by Proposition~\ref{prop:HomologySwitch}, $H^1(\cS/\J)\cong H^2(\J)$.  We will show that the localization $H^1(\cS/\J)_P$ at any homogeneous non-maximal prime is zero, using the fact that localization commutes with taking homology.  To this end, localize the complex $\cS/\J$:
\[
0\rightarrow S^3_P\xrightarrow{(\bar{\delta}^0)_P} \bigoplus\limits_{H\in L_1} \dfrac{S}{J(H)}_P \xrightarrow{(\bar{\delta}^1)_P}\bigoplus\limits_{Y\in \lt}\dfrac{S}{J(Y)}_P \rightarrow 0.
\]
If $J(H)\not\subset P$ or $J(Y)\not\subset P$, then the corresponding summand in the above chain complex vanishes.  Suppose $P$ is codimension one.  Then $P$ is principal, hence $P$ can contain $J(H)$ for at most one $H\in L_1$.  If $P$ contains none of the ideals $J(H)$, for $H\in L_1$, then clearly $H^1(\cS/\J)_P=0$.  So suppose there is some $H\in L_1$ so that $J(H)\subset P$ (since $P$ is codimension one, this must be the only such $J(H)$).  Clearly in this case $(\bar{\delta}_0)_P:S^3_P\rightarrow S/J(H)_P$ is surjective, so again $H^1(\cS/\J)_P=0$.  Now suppose $P$ is codimension two (so $P$ is the ideal of a point in projective two-space).  If $P$ contains only one $H\in L_1$, then the same argument as above shows that $H^1(\cS/\J)_P=0$.  If $P$ contains two or more hyperplanes of $\A$, then $P$ must be the ideal of the point of intersection of whatever hyperplanes $H$ satisfy $J(H)\subset P$.  If the point is a simple point of $\A$, then there are only two hyperplanes $H,H'\in\A$ so that $J(H)\subset P$ and $J(H')\subset P$.  So $(\cS/\J)_P$ looks like
\[
0\rightarrow S^3_P\xrightarrow{(\bar{\delta}^0)_P} \dfrac{S}{J(H)}_P\oplus\dfrac{S}{J(H')}_P,
\]
which is clearly exact on the right, so $H^1(\cS/\J)_P=0$.  Finally, suppose $P$ is the ideal of a triple point $Y\in \lt$ (intersection of $H,H',$ and $H''\in\A$).  Then $(\cS/\J)_P$ looks like
\[
0\rightarrow S^3_P\xrightarrow{(\bar{\delta}^0)_P} \dfrac{S}{J(H)}_P\oplus\dfrac{S}{J(H')}_P\oplus\dfrac{S}{J(H'')}_P\xrightarrow{(\bar{\delta}^1)_P} \dfrac{S}{J(Y)}_P \rightarrow 0.
\]
A check yields that $\mbox{coker}((\bar{\delta}_0)_P)=(S/J(Y))_P$, so again $H^1(\cS/\J)_P=0$.  It follows that $H^1(\cS/\J)$ is only supported at $\frakm$, the homogeneous maximal ideal.

Now to show that freeness of $D(X_3,\m)$ is equivalent to vanishing of $H^2(\J)$, consider the following four-term exact sequence:
\begin{equation}\label{D-exact}
0\rightarrow D(X_3,\m)\cong H^1(\J)\rightarrow  \bigoplus\limits_{H\in L_1} J(H)\xrightarrow{\delta^1} \bigoplus\limits_{Y\in \lt}J(Y) \rightarrow H^2(\J) \rightarrow 0.
\end{equation}
Write $\J_1$ for $\bigoplus\limits_{H\in L_1} J(H)$ and $\J_2$ for $J(Y_1)\oplus J(Y_2) \oplus J(Y_3)$.  Then break this four term sequence into the two short exact sequences
\[
0\rightarrow K \rightarrow \J^2 \rightarrow H^2(\J) \rightarrow 0\qquad\mbox{and}\qquad 0\rightarrow D(X_3,\m) \rightarrow \J^1 \rightarrow K \rightarrow 0,
\]
where $K=\mbox{im}(\delta^1)$.  By the long exact sequence in $\mbox{Ext}$ applied to the first short exact sequence, coupled with the fact that $\mbox{Ext}^j_S(J(Y_i),S)$ vanishes when $j>1$ (hence $\mbox{Ext}^j_S(\J_2,S)$ vanishes when $j>1$), we obtain $\mbox{Ext}^3_S(H^2(\J),S)\cong\mbox{Ext}^2_S(K,S)$.  Similarly, applying the long exact sequence in $\mbox{Ext}$ to the second short exact sequence (and using that $\mbox{Ext}^j(\J_1,S)=0$ for $j>0$ since $\J_1$ is a sum of principal ideals) yields that $\mbox{Ext}^2_S(K,S)\cong \mbox{Ext}^1(D(X_3,\m),S)$.  All in all, we see that $\mbox{Ext}^3_S(H^2(\J),S)\cong \mbox{Ext}^1_S(D(X_3,\m),S)$.  Since $D(X_3,\m)$ is a second syzygy, freeness of $D(X_3,\m)$ is equivalent to vanishing of $\mbox{Ext}^1_S(D(X_3,\m),S)$, which is equivalent to vanishing of $\mbox{Ext}^3_S(H^2(\J),S)$.  Since $H^2(\J)\cong H^1(\cS/\J)$ is only supported at $\frakm$, $\mbox{Ext}^3_S(H^2(\J),S)$ vanishes if and only if $H^2(\J)=0$, and we are done.
\end{proof}

\section{Free multiplicities}\label{sec:freemult}

In this section we derive the full characterization of free multiplicities on $(X_3,\m)$.  The characterization hinges on the use of Proposition~\ref{prop:VanishingHomologies}, together with a precise description of the homology module $H^2(\J)$.  We begin by giving this description in terms of syzygies.

Denote by $e_H$ a generator for $J(H)$ (of degree $\m(H)$) and given a triple point $Y$ and a hyperplane $H$ passing through $Y$ we will denote by $e_{H,Y}$ (of degree $\m(H)$) a generator for $J(H)$ viewed as a sub-ideal of $J(Y)$.  We will use the following commutative diagram to get a presentation for $H^2(\J)$ using syzygies of the ideal $J(Y)$.

\begin{center}
\begin{tikzcd}
	0 & 0 \\
	\bigoplus\limits_{H\in L_1} J(H) \ar{r}{\delta^1}\ar{u} &\bigoplus\limits_{Y\in \lt} J(Y) \ar{u} \\
	\bigoplus\limits_{H\in L_1} S[e_{H}] \ar{r}{\hat{\delta}^1}\ar{u} & \bigoplus\limits_{Y\in \lt}\left[\bigoplus\limits_{\substack{H\in L_1\\ Y\subset H}} S[e_{H,Y}] \right] \ar{u}\\
	0\ar{u}\ar{r} &\bigoplus\limits_{Y\in \lt} \syz(J(Y)) \ar{u}\\
	& 0\ar{u} \\
\end{tikzcd}
\end{center}

The snake lemma yields an exact sequence
\begin{equation}\label{eq:SNAKE}
\bigoplus\limits_{Y\in \lt} \syz(J(Y))\rightarrow \mbox{coker}(\hat{\delta}^1)\rightarrow \mbox{coker}(\delta^1)=H^2(\J)\rightarrow 0.
\end{equation}
We now explicitly identify the image of the syzygy modules inside of $\mbox{coker}(\hat{\delta}^1)$, which we will show is isomorphic to $S^3$.  As noted in \S~\ref{ss:HB}, each of the ideals $J(Y_i)$ is Cohen-Macaulay of codimension two, with Hilbert-Burch resolution of the form
\[
0\rightarrow S^2\xrightarrow{\phi_i} S^3 \rightarrow J(Y_i)\rightarrow 0,
\]
where, as in \S~\ref{ss:HB}, we will write
\[
\phi_1=
\begin{bmatrix}
A_1 & B_1\\
A_2 & B_2\\
A_3 & B_3
\end{bmatrix}
\qquad
\phi_2=
\begin{bmatrix}
C_1 & D_1\\
C_2 & D_2\\
C_3 & D_3
\end{bmatrix}
\qquad
\phi_3=
\begin{bmatrix}
E_1 & F_1\\
E_2 & F_2\\
E_3 & F_3
\end{bmatrix}.
\]
We see that the image of $\bigoplus_{Y\in L_2} \syz(J(Y))$ inside of $\bigoplus\limits_{Y\in \lt}\left[\bigoplus\limits_{\substack{H\in L_1\\ Y\subset H}} S[e_{H,Y}] \right]$ is generated by the columns of the following matrix (we label the rows by pairs $(H_i,Y_i)$ corresponding to $e_{H_i,Y_i}$:
\[
\bordermatrix{ & & & & & \cr
[H_1,Y_1] & A_1 & B_1 & 0 & 0 & 0 & 0 \cr 
[H_2,Y_1] & A_2 & B_2 & 0 & 0 & 0 & 0 \cr 
[H_4,Y_1] &	A_3 & B_3 & 0 & 0 & 0 & 0 \cr 
[H_1,Y_2] & 0   &   0 &C_1&D_1& 0 & 0 \cr 
[H_3,Y_2] & 0   & 0   &C_2&D_2& 0 & 0 \cr
[H_5,Y_2] & 0   & 0   &C_3&D_3& 0 & 0 \cr
[H_2,Y_2] & 0   & 0   & 0 & 0 &E_1&F_1\cr 
[H_3,Y_3] & 0   & 0   & 0 & 0 &E_2&F_2\cr 
[H_6,Y_6] & 0   & 0   & 0 & 0 &E_3&F_3\cr 
	}_\text{\large{.}}
\]
The map $\hat{\delta}^1$ is lifted from $\delta^1$ and has the following matrix:
\[
\bordermatrix{& H_1 & H_2 &H_3&H_4&H_5&H_6\cr
	[H_1,Y_1] & 1   & 0   & 0 & 0 & 0 & 0 \cr 
	[H_2,Y_1] & 0   & -\alpha   & 0 & 0 & 0 & 0 \cr 
	[H_4,Y_1] &	0   & 0   & 0 &-1 & 0 & 0 \cr 
	[H_1,Y_2] & 1   &   0 &0  & 0 & 0 & 0 \cr 
	[H_3,Y_2] & 0   & 0   & 1 & 0 & 0 & 0 \cr
	[H_5,Y_2] & 0   & 0   & 0 & 0 &-1 & 0 \cr
	[H_2,Y_3] & 0   & 1   & 0 & 0 & 0 & 0 \cr 
	[H_3,Y_3] & 0   & 0   & 1 & 0 & 0 & 0 \cr 
	[H_6,Y_3] & 0   & 0   & 0 & 0 & 0 & -1\cr 
}_\text{\large{.}}
\]
This matrix clearly has full rank, so $\mbox{coker}(\hat{\delta}^1)\cong S^3$, generated for instance by the images of $e_{H_1,Y_1},e_{H_2,Y_1},$ and $e_{H_3,Y_2}$; call these $[e_{H_1,Y_1}],[e_{H_2,Y_1}],$ and $[e_{H_3,Y_2}]$.  With respect to this basis we may represent $\mbox{coker}(\hat{\delta}^1)$ as the image of $S^9$ under the projection with matrix
\[
\begin{bmatrix}
1 & 0 & 0 & -1 & 0 & 0 & 0 & 0 & 0 \\
0 & 1 & 0 & 0 & 0 & 0 & \alpha & 0 & 0 \\
0 & 0 & 0 & 0 & 1 & 0 & 0 & -1 & 0
\end{bmatrix}_\text{\large{.}}
\]
With this choice of projection, the image of the syzygies inside of $S^3$ is generated by the columns of the following matrix (rows are labeled by $[H_i,Y_1]$ corresponding to $[e_{H_i,Y_1}]$ for $i=1,2,4$):
\[
M=\bordermatrix{ & & & & & \cr 
	[H_1,Y_1] &A_1&B_1&-C_1&-D_1&    0      &      0      \cr
	[H_2,Y_1] &A_2&B_2& 0  & 0  &\alpha E_1&\alpha F_1  \cr
	[H_3,Y_2] & 0 & 0 &C_2 &D_2 &-E_2       &-F_2
	}_\text{\large{.}}
\]
Together with Proposition~\ref{prop:VanishingHomologies} and the presentation~\eqref{eq:SNAKE}, we have proved the following result.
\begin{prop}\label{prop:HomologyPresentation}
With all notation as above, $D(X_3,\m)$ is free if and only if the columns of $M$ generate the free module $S^3=\mbox{coker}(\hat{\delta}^1)$.
\end{prop}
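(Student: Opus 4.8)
The plan is to combine the two ingredients already in place: Proposition~\ref{prop:VanishingHomologies}, which says $D(X_3,\m)$ is free if and only if $H^2(\J)=0$, and the snake-lemma sequence~\eqref{eq:SNAKE}, which presents $H^2(\J)$ as a cokernel. Concretely, \eqref{eq:SNAKE} exhibits $H^2(\J)$ as the cokernel of the map $\bigoplus_{Y\in\lt}\syz(J(Y))\to \operatorname{coker}(\hat\delta^1)$, so it suffices to identify this map with one whose image is the column span of $M$; then $H^2(\J)=0$ becomes the statement that the columns of $M$ generate $S^3$.

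First I would check that the displayed diagram has exact columns and commutes, so that the snake lemma applies. The left column is exact because sending $e_H$ to $\alpha_H^{\m(H)}$ identifies the rank-one free module $S[e_H]$ with the principal ideal $J(H)$; the right column is exact because, for each triple point $Y$, the module $\bigoplus_{H\supset Y}S[e_{H,Y}]$ is free on the three chosen generators of $J(Y)$ and the vertical map is the canonical surjection, with kernel $\syz(J(Y))$ by definition. The map $\hat\delta^1$ is the evident lift of $\delta^1$: replace each generator $e_H$ by $\sum_{Y\subset H}e_{H,Y}$ with the same scalar coefficients occurring in the matrix of $\delta^1$; commutativity of the lower square is then immediate. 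Reading the two columns as a short exact sequence of two-term complexes (the middle one being $\J$, the kernel complex being $\bigoplus_{Y}\syz(J(Y))$ placed in cohomological degree two), the long exact cohomology sequence is precisely~\eqref{eq:SNAKE}, so $H^2(\J)=0$ exactly when $\bigoplus_{Y}\syz(J(Y))\to\operatorname{coker}(\hat\delta^1)$ is surjective.

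Next I would pin down $\operatorname{coker}(\hat\delta^1)$ and the image of the syzygies inside it. The matrix of $\hat\delta^1$ has scalar entries, and because $\alpha\neq 0$ it contains a maximal minor equal to $\pm\alpha$ (for instance the one on the rows indexed by $[H_1,Y_1],[H_2,Y_1],[H_4,Y_1],[H_3,Y_3],[H_5,Y_2],[H_6,Y_3]$), so $\hat\delta^1$ is a split injection of free modules and $\operatorname{coker}(\hat\delta^1)$ is free of rank three. The displayed $3\times 9$ matrix defines an $S$-linear retraction $\pi\colon S^9\to S^3$ with $\pi\circ\hat\delta^1=0$ and $\pi$ surjective; since the induced map $\operatorname{coker}(\hat\delta^1)=S^9/\operatorname{im}(\hat\delta^1)\to S^3$ is then a surjection of free rank-three modules, it is an isomorphism. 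The inclusion $\bigoplus_{Y}\syz(J(Y))\hookrightarrow\bigoplus_{Y}\bigoplus_{H\supset Y}S[e_{H,Y}]$ has matrix the block arrangement of the Hilbert-Burch matrices $\phi_1,\phi_2,\phi_3$ displayed above, and composing it with $\pi$ yields exactly the matrix $M$. Hence the image of $\bigoplus_{Y}\syz(J(Y))$ in $\operatorname{coker}(\hat\delta^1)\cong S^3$ is the column span of $M$, which is all of $S^3$ if and only if the columns of $M$ generate $S^3$. Combining with Proposition~\ref{prop:VanishingHomologies} gives the statement.

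The only real difficulty is bookkeeping: one must propagate the signs and the parameter $\alpha$ from $\delta^1$ correctly through the lift $\hat\delta^1$, the retraction $\pi$, and the block placement of the syzygy columns, so that the composite is $M$ on the nose. This is routine but has to be tracked carefully over the nine generators $e_{H_i,Y_j}$, the six hyperplanes, and the three triple points; everything else is a formal consequence of the snake lemma together with Proposition~\ref{prop:VanishingHomologies}.
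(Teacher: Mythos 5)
Your proposal is correct and follows essentially the same route as the paper: the paper's ``proof'' is exactly the discussion preceding the proposition (the commutative diagram, the snake-lemma sequence~\eqref{eq:SNAKE}, the identification $\mbox{coker}(\hat{\delta}^1)\cong S^3$ via the displayed projection, and the identification of the image of the syzygies with the column span of $M$), combined with Proposition~\ref{prop:VanishingHomologies}. You have merely made explicit the verifications (exactness of the columns, $\hat{\delta}^1$ split injective, the projection composed with the block Hilbert--Burch matrix equaling $M$) that the paper leaves implicit.
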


Now we classify free multiplicities on $X_3$ (and all of its moduli).

\begin{thm}\label{thm:X3Classification}
For any $\alpha\neq 0,1$, freeness of $(X_3,\m)$ implies $m_4=m_5=m_6=1$ and $m_1=m_2=m_3=n$ for some positive integer $n$.  So free multiplicities are of the form $[n,n,n,1,1,1]$.  Furthermore,
\begin{itemize}
	\item if $\alpha$ is not a root of unity then $[n,n,n,1,1,1]$ is a free multiplicity for any $n>1$, and
	\item if $\alpha$ is a root of unity with order $m$, then $[n,n,n,1,1,1]$ is a free multiplicity if and only if $n\not\equiv 1\mbox{ mod } m$.  (In particular, if $\alpha=-1$, then $[n,n,n,1,1,1]$ is a free multiplicity if and only if $n$ is even.)
\end{itemize}

Moreover, when $(X_3,\m)$ is free the exponents are $(n+1,n+1,n+1)$.
\end{thm}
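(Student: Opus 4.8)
The engine is Proposition~\ref{prop:HomologyPresentation}: $(X_3,\m)$ is free iff the columns of the $3\times 6$ matrix $M$ generate $\mbox{coker}(\hat\delta^1)\cong S(-m_1)\oplus S(-m_2)\oplus S(-m_3)$, i.e.\ iff $\mbox{coker}(M)\cong H^2(\J)=0$. Now $M$ is a \emph{homogeneous} presentation matrix for the finitely generated graded module $H^2(\J)$ (its six columns are homogeneous, their degrees being the six Hilbert--Burch syzygy degrees of $J(Y_1),J(Y_2),J(Y_3)$, two from each, by the grading set up in \S\ref{ss:HB}), so graded Nakayama applies: \emph{$(X_3,\m)$ is free $\iff$ the constant--term matrix $M(0)\in\kk^{3\times 6}$ has rank $3$.} The bookkeeping input used throughout is the classical description of the syzygies of an ideal $\langle\ell_1^{a_1},\ell_2^{a_2},\ell_3^{a_3}\rangle$ generated by powers of three distinct linear forms in two variables: writing $b_1\le b_2\le b_3$ for the sorted exponents, the two syzygy degrees $d\le d'$ (with $d+d'=a_1+a_2+a_3$) are $(b_1+b_2,\,b_3)$ when $b_3\ge b_1+b_2-1$ and $(\lfloor(b_1+b_2+b_3)/2\rfloor,\lceil(b_1+b_2+b_3)/2\rceil)$ otherwise; moreover there is a syzygy of degree $a_i$ whose component paired with $\ell_i^{a_i}$ is a nonzero constant precisely when $\ell_i^{a_i}$ lies in the complete intersection generated by the other two, equivalently when $a_i\ge\bigl(\sum_{j\ne i}a_j\bigr)-1$.

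\textbf{Necessity: $\m$ must be $[n,n,n,1,1,1]$.} Here one shows $M(0)$ can have rank $3$ only if $m_4=m_5=m_6=1$ and $m_1=m_2=m_3$. The constant parts of the two columns coming from $J(Y_i)$ are supported on a single coordinate $2$--plane of $\kk^3$ (spanned by $[H_1,Y_1],[H_2,Y_1]$ for $Y_1$; by $[H_1,Y_1],[H_3,Y_2]$ for $Y_2$; by $[H_2,Y_1],[H_3,Y_2]$ for $Y_3$), and \emph{which} entries of $M(0)$ are nonzero constants is dictated, via the two facts above, by inequalities among the $m_i$ (namely, whether a syzygy degree of $J(Y_i)$ equals a generator degree). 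A finite case analysis on the orderings of $(m_1,m_2,m_4),(m_1,m_3,m_5),(m_2,m_3,m_6)$ then shows that reaching rank $3$ forces each triple point to supply a genuine ``constant'' relation, which pins the multiplicity of the off--diagonal hyperplane ($H_4,H_5,H_6$) through that triple point to be $1$; once $m_4=m_5=m_6=1$ the three syzygy--degree pairs collapse to $\{\min(m_i,m_j)+1,\max(m_i,m_j)\}$, and demanding that the surviving constant columns span $\kk^3$ forces $m_1=m_2=m_3$. A consistency check available throughout: from \eqref{D-exact} with $H^2(\J)=0$ one gets $\HS(D(X_3,\m))\cdot(1-t)^3=\sum_{k=1}^{6}t^{c_k}-(t^{m_1}+t^{m_2}+t^{m_3})$, so for \emph{any} free multiplicity the multiset of the six syzygy degrees $c_k$ of the $J(Y_i)$ must equal $\{m_1,m_2,m_3,e_1,e_2,e_3\}$; this eliminates most non--special $\m$ outright. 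I expect this case analysis — tracking which syzygy degree equals which $m_i$ across all three ideals simultaneously, and verifying non--spanning in each surviving case — to be the main obstacle.

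\textbf{The explicit family and the $\alpha$--dichotomy.} With $\m=[n,n,n,1,1,1]$ the ideals simplify: $J(Y_1)=\langle x^n,y^n,x-\alpha y\rangle=\langle y^n,\,x-\alpha y\rangle$ since $x^n\equiv\alpha^n y^n\pmod{x-\alpha y}$, with Hilbert--Burch matrix
\[
\phi_1=\begin{bmatrix}1&0\\ -\alpha^n& x-\alpha y\\ -p_1& -y^n\end{bmatrix},\qquad p_1=\sum_{i=0}^{n-1}\alpha^{\,n-1-i}x^i y^{\,n-1-i},
\]
and $J(Y_2),J(Y_3)$ are identical with $\alpha$ replaced by $-1$ (using $x+z=x-(-1)z$, $y+z=y-(-1)z$). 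Assembling $M$ as in Proposition~\ref{prop:HomologyPresentation} and reducing mod $\frakm$,
\[
M(0)=\begin{bmatrix}1&0&-1&0&0&0\\ -\alpha^n&0&0&0&\alpha&0\\ 0&0&(-1)^{n+1}&0&-(-1)^{n+1}&0\end{bmatrix},
\]
whose only nonzero columns are the first, third and fifth, and the determinant of that $3\times 3$ submatrix is $(-1)^{n+1}\alpha(\alpha^{n-1}-1)$. Hence $(X_3,\m)$ is free $\iff\alpha^{n-1}\ne 1$: if $\alpha$ is not a root of unity this says $n>1$; if $\alpha$ has order $m$ it says $n\not\equiv 1\pmod m$ (and $m=2$ when $\alpha=-1$), which is exactly the stated dichotomy.

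\textbf{The exponents.} When $H^2(\J)=0$, the sequence \eqref{D-exact} becomes the short exact sequence $0\to D(X_3,\m)\to\bigoplus_{H}J(H)\to\bigoplus_{Y}J(Y)\to 0$. Each $J(Y_i)$ has generator degrees $n,n,1$ and (as just computed) syzygy degrees $n,n+1$, so
\[
\HS(D(X_3,\m))=\frac{3t^n+3t}{(1-t)^3}-3\cdot\frac{t^n+t-t^{n+1}}{(1-t)^3}=\frac{3t^{n+1}}{(1-t)^3},
\]
and since $D(X_3,\m)$ is free this forces its exponents to be $(n+1,n+1,n+1)$. (Equivalently, the multiset of the six syzygy degrees is $\{n,n,n,n+1,n+1,n+1\}=\{m_1,m_2,m_3\}\sqcup\{e_1,e_2,e_3\}$, so $\{e_1,e_2,e_3\}=\{n+1,n+1,n+1\}$.)
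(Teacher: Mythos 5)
Your overall strategy is the paper's: reduce via Proposition~\ref{prop:HomologyPresentation} to a rank condition on the presentation matrix $M$, and your graded-Nakayama reformulation (freeness iff the constant-term matrix $M(0)$ has rank $3$) is equivalent to the paper's criterion that some $3\times 3$ minor of $M$ have nonzero constant determinant, since each such minor is homogeneous. The sufficiency half (the explicit $\phi_i$, the determinant $(-1)^{n+1}\alpha(\alpha^{n-1}-1)$, and the resulting dichotomy on $\alpha$) and the exponent computation via the Hilbert series of \eqref{D-exact} agree with the paper and are correct.

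The gap is in the necessity half: you assert that ``a finite case analysis'' forces $m_4=m_5=m_6=1$ and $m_1=m_2=m_3$, flag it as the main obstacle, and do not carry it out. In fact no extensive case analysis is needed, but two omitted points are load-bearing. First, you must rule out achieving rank $3$ using two constant columns from a single ideal: this holds because the $2\times 2$ minor of $\phi_i$ in the two rows that survive the projection is (a nonzero constant times) a generator of $J(Y_i)$, hence has positive degree and zero constant term, so the two constant columns coming from any one $J(Y_i)$ are proportional and each ideal contributes at most rank one to $M(0)$; thus one needs one column from each of $J(Y_1),J(Y_2),J(Y_3)$. Second, for the resulting determinant $-\alpha A_1E_1C_2-A_2C_1E_2$ to be a nonzero constant, \emph{all six} of $A_1,A_2,C_1,C_2,E_1,E_2$ must be nonzero constants: if, say, $A_1=0$, the column is a Koszul syzygy on the remaining two generators, so $A_2$ is a positive-degree multiple of $(x-\alpha y)^{m_4}$ and the surviving term cannot be a nonzero constant; homogeneity of the two terms then upgrades ``nonzero'' to ``nonzero constant.'' Your own second ``classical fact'' now gives both $m_1\ge m_2+m_4-1$ (from $A_1$) and $m_2\ge m_1+m_4-1$ (from $A_2$), whose sum yields $m_4\le 1$, hence $m_4=1$ and $m_1=m_2$; the analogous pairs at $Y_2,Y_3$ give $m_5=m_6=1$ and $m_1=m_2=m_3$. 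With these two observations supplied your argument closes and coincides with the paper's proof.
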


\begin{remark}
In Theorem~\ref{thm:X3Classification}, if we fix a multiplicity $\m=[n,n,n,1,1,1]$ then $(X_3,\m)$ is free as long as $\alpha$ avoids the roots of $t^{n-1}-1$.  Moreover $(X_3,\m)$ is not free for any $\alpha$ if $\m$ does not have the form $[n,n,n,1,1,1]$.  So among the moduli of arrangements with the $X_3$ lattice and a fixed multiplicity $\m$, the multi-arrangements which are free form a (possibly empty) Zariski open set.  We do not know if this is true in general.  Yuzvinsky~\cite{Y93} has shown that this is the case for simple arrangements.
\end{remark}


\begin{proof}
We use Proposition~\ref{prop:HomologyPresentation}.  The columns of $M$ generate $S^3$ if and only if there is a $3\times 3$ minor $M'$ of $M$ so that $\det(M')$ is a non-zero constant.  Let us see what constraints this assumption places on the entries of $M'$.

Suppose first that $M'$ contains two columns corresponding to the same ideal; without loss of generality assume that
\[
M'=
\left(
\begin{array}{ccc}
A_1 & B_1 & F \\
A_2 & B_2 & G \\
0   &  0  & H
\end{array}
\right),
\]
where the third column is some other column of $M$.  Then $\det(M')=H(A_1B_2-B_1A_2)$.  However, by the structure of the Hilbert-Burch matrix (see~\S~\ref{ss:HB}) $A_1B_2-B_1A_2$ is a (non-zero) constant multiple of a generator of the ideal $J(Y_1)$, hence this implies $\det(M')$ is either zero or has positive degree, so cannot be non-vanishing constant.  It follows that if $\det(M')$ is a non-zero constant, it must have a column corresponding to a single syzygy from each of the three ideals $J(Y_1),J(Y_2),$ and $J(Y_3)$.  Without loss of generality, we may assume that $M'$ has the form
\[
M'=
\left(
\begin{array}{ccc}
A_1 &-C_1   & 0\\
A_2 & 0   & \alpha E_1 \\
 0  &C_2  & -E_2
\end{array}
\right),
\]
with $\det(M')=-\alpha A_1E_1C_2-A_2C_1E_2$.  Suppose that one of the entries off of the anti-diagonal of $M'$ vanishes; without loss suppose $A_1=0$.  Then it follows that $A_2y^{m_2}+A_3(x+y)^{m_4}$ is a Koszul syzygy on the forms $y^{m_2}$ and $(x+y)^{m_4}$ (see Example~\ref{ex:HB}).  However, this implies that the second term $A_2C_1E_2$ either has positive degree or vanishes.  Both cases contradict that $\det(M')$ is a non-zero constant.  It follows that all entries other than the anti-diagonal entries of $M'$ are non-zero.  Therefore, since both terms of $\det(M')$ are homogeneous, the only way that $\det(M')$ is a non-zero constant is if all entries of $M'$ other than the anti-diagonal entries are non-zero constants.

Now recall the analysis in Example~\ref{ex:HB}.  The presence of \textit{any} non-zero constant in the matrix $\phi_i$ expresses the fact that $J(Y_i)$ is minimally generated by two of the three given generators.  For simplicity let us consider $J(Y_1)$.  The fact that $A_1$ and $A_2$ are \textit{both} non-zero constants means that $J(Y_1)$ may be generated by $(x-\alpha y)^{m_4}$ along with \textit{either} $x^{m_1}$ or $y^{m_2}$.  It is not difficult to check that $J(Y_1)$ can be generated by $y^{m_2}$ and $(x-\alpha y)^{m_4}$ if and only if $m_2+m_4\le m_1+1$.  Likewise, $J(Y_1)$ can be generated by $x^{m_1}$ and $(x-\alpha y)^{m_4}$ if and only if $m_1+m_4\le m_2+1$.  To satisfy both of these inequalities, we must have $m_4=1$ and $m_1=m_2$.  A similar analysis with $J(Y_2)$, $J(Y_3)$ yields $m_5=1$ and $m_1=m_3$, $m_6=1$ and $m_2=m_3$, respectively.  Hence the ideals of the triple points have the form
\[
J(Y_1)=\langle x^n, y^n, x-\alpha y\rangle \qquad J(Y_2)=\langle x^n,z^n,x+z\rangle \qquad J(Y_3)=\langle y^n,z^n,y+z\rangle,
\]
where $m_1=m_2=m_3=n$.  This proves the first statement of the theorem.  Now, if we assume $m_1=m_2=m_3=n$, we have
\[
\phi_1=
\left(
\begin{array}{cc}
 1 & 0 \\
-\alpha^{n} & -(x-\alpha y) \\
-(x^{n}-\alpha^{n}y^{n})/(x-\alpha y) & y^{n}
\end{array}
\right),
\]
\[
\phi_2=
\left(
\begin{array}{cc}
1 & 0 \\
(-1)^{n+1} & -(x+z) \\
-(x^{n}-(-1)^{n}z^{n})/(x+ z) & z^{n}
\end{array}
\right),
\]
\[
\mbox{and}\qquad\phi_3=
\left(
\begin{array}{cc}
1 & 0 \\
(-1)^{n+1} & -(y+z) \\
-(y^{n}-(-1)^{n}z^{n})/(y+z) & z^{n}
\end{array}
\right).
\]
See Example~\ref{ex:HB} for details on the computation of $\phi_1,\phi_2,$ and $\phi_3$.  This yields $A_1=C_1=E_1=1,A_2=-\alpha^n$, and $C_2=E_2=(-1)^{n+1}$, so $\det M'=-\alpha A_1E_1C_2-A_2C_1E_2=-\alpha (-1)^{n+1}(\alpha^{n-1}-1)$.  This is zero (hence $[n,n,n,1,1,1]$ is not free) if and only if $\alpha^{n-1}=1$; equivalently $n\equiv 1 \mod m$, where $m$ is the order of $\alpha$.

Finally, the exponents for the free multiplicities can be calculated by two different methods. First, one can look at the short exact sequence in \eqref{D-exact} and compute the Hilbert functions to get the exponents. Second, one can use the multiarrangement addition-deletion theorem \cite{ATW08} with the deleted hyperplane being $x-\alpha y$. In this case the deletion is the deleted $A_3$ arrangement whose free multiplicities are classified by Takuro Abe in \cite{A07}. 
\end{proof}

\begin{remark}\label{rem:Characteristic}
Theorem~\ref{thm:X3Classification} appears to hold over finite fields (of characteristic greater than two, since we assume $\alpha\neq 0,1$).
\end{remark}





\begin{remark}
	It is possible to recover portions of Theorem~\ref{thm:X3Classification} using other techniques for multi-arrangements.  We list a few here.
	\begin{enumerate}
		\item If $\alpha=-1$ and we assume $m_4=m_5=m_6=1$, then local and global mixed products~\cite{ATW07} can be used to show that $(X_3,\m)$ is free if and only if $m_1=m_2=m_3=2k$ for some natural number $k$.
		\item If $(X_3,\m)$ satisfies that all sub-$A_2$ multi-arrangements are `balanced,' then local and global mixed products show that $\m$ is not a free multiplicity if one of $m_4,m_5,$ or $m_6$ is greater than one.
		\item If $m_i=1$ for some $i=1,\ldots,6$, then the deletion of $(X_3,\m)$ with respect to $H_i$ has the deleted $A_3$ arrangement as the base arrangement.  The classification of all free multiplicities on the deleted $A_3$ arrangement by Abe~\cite{A07}, along with addition/deletion/restriction theorems for multi-arrangements~\cite{ATW08} yields some constraints on the $m_i$ (however, this method runs into the problem of Remark~\ref{rem:P1}).
		\item If $(X_3,\m)$ has a `heavy flag,' then $\m$ is not a free multiplicity~\cite[Corollary~5.2]{AK16}.
	\end{enumerate}
	We do not know if it is possible to recover Theorem~\ref{thm:X3Classification} entirely using other (non-homological) techniques for multi-arrangements.
\end{remark}




Next we examine a basis for the case where $\alpha=-1$ and $D(X_3,\m)$ is free.

\begin{prop}\label{lem:Basis}
The derivations \[
\begin{array}{rl}
\theta_1 & =x^{2k+1}\dd_1+y^{2k+1}\dd_2+z^{2k+1}\dd_3\\
\theta_2 & =(y+z)(x^{2k}\dd_1-y^{2k}\dd_2-z^{2k}\dd_3)\\
\theta_3 & =x^{2k}z\dd_1-(x+y+z)y^{2k}\dd_2+xz^{2k}\dd_3
\end{array}
\] form a basis for the $S$-module $D(X_3,\m)$ for $\alpha=-1$ where $\m=[2k,2k,2k,1,1,1]$. 
\end{prop}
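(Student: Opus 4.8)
The plan is to verify directly that $\theta_1,\theta_2,\theta_3$ lie in $D(X_3,\m)$ for $\m=[2k,2k,2k,1,1,1]$ and $\alpha=-1$, and then to apply Saito's criterion (the Proposition attributed to~\cite{Z89}) by showing that the determinant of the coefficient matrix $M(\theta_1,\theta_2,\theta_3)$ is a non-zero scalar multiple of $Q(X_3,\m)=x^{2k}y^{2k}z^{2k}(x+y)(x+z)(y+z)$. Recall that with $\alpha=-1$ the six defining forms are $\alpha_1=x$, $\alpha_2=y$, $\alpha_3=z$, $\alpha_4=x+y$, $\alpha_5=x+z$, $\alpha_6=y+z$, so membership in $D(X_3,\m)$ requires $\theta(x)\in (x^{2k})$, $\theta(y)\in(y^{2k})$, $\theta(z)\in(z^{2k})$, $\theta(x+y)\in(x+y)$, $\theta(x+z)\in(x+z)$, and $\theta(y+z)\in(y+z)$.

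First I would check membership. For $\theta_1$ the first three conditions are immediate, and $\theta_1(x+y)=x^{2k+1}+y^{2k+1}$, which is divisible by $x+y$ precisely because $2k+1$ is odd; similarly for $x+z$ and $y+z$. For $\theta_2=(y+z)(x^{2k}\dd_1-y^{2k}\dd_2-z^{2k}\dd_3)$ the coefficient of $\dd_j$ is visibly divisible by the appropriate power of the $j$-th variable, and the factor $(y+z)$ immediately handles the condition at $H_6$; the conditions at $H_4$ and $H_5$ reduce to $(y+z)(x^{2k}\mp y^{2k})$ and $(y+z)(x^{2k}\mp z^{2k})$ — here one has to be slightly careful, and I expect that the signs in $\theta_2$ are arranged exactly so that $\theta_2(x+y)=(y+z)(x^{2k}-y^{2k})$ is divisible by $x+y$ (true since $2k$ is even) and likewise $\theta_2(x+z)=(y+z)(x^{2k}-z^{2k})$ is divisible by $x+z$. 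For $\theta_3=x^{2k}z\,\dd_1-(x+y+z)y^{2k}\,\dd_2+xz^{2k}\,\dd_3$ the first three conditions are again clear; then $\theta_3(x+y)=x^{2k}z-(x+y+z)y^{2k}$, and modulo $x+y$ this is $x^{2k}z-zy^{2k}=z(x^{2k}-y^{2k})\equiv 0$; similarly $\theta_3(x+z)=x^{2k}z+xz^{2k}=xz(x^{2k-1}+z^{2k-1})$, divisible by $x+z$ since $2k-1$ is odd; and $\theta_3(y+z)=-(x+y+z)y^{2k}+xz^{2k}$, which modulo $y+z$ becomes $-xy^{2k}+xz^{2k}=x(z^{2k}-y^{2k})\equiv 0$. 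This establishes $\theta_1,\theta_2,\theta_3\in D(X_3,\m)$.

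The remaining step is the determinant computation. One writes
\[
M(\theta_1,\theta_2,\theta_3)=
\begin{pmatrix}
x^{2k+1} & y^{2k+1} & z^{2k+1}\\
(y+z)x^{2k} & -(y+z)y^{2k} & -(y+z)z^{2k}\\
x^{2k}z & -(x+y+z)y^{2k} & xz^{2k}
\end{pmatrix},
\]
pulls out $x^{2k}$ from the first column, $y^{2k}$ from the second, $z^{2k}$ from the third, and $(y+z)$ from the second row, reducing the problem to showing that the resulting $3\times 3$ determinant of linear forms equals a non-zero constant times $(x+y)(x+z)$. I would expand this small determinant directly; the main obstacle — and really the only non-routine part of the whole argument — is confirming that this cofactor expansion collapses to exactly $(x+y)(x+z)$ up to a non-zero scalar rather than to something with an extra factor or an identically-zero result. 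By Theorem~\ref{thm:X3Classification} (with $\alpha=-1$, $n=2k$) we already know $D(X_3,\m)$ is free with exponents $(2k+1,2k+1,2k+1)$, so $\det M(\theta_1,\theta_2,\theta_3)$ is automatically either zero or a scalar multiple of $Q(X_3,\m)$; hence it suffices to rule out the zero case, e.g.\ by evaluating at a single convenient point such as $(x,y,z)=(1,1,-1)$ or by checking that the reduced $3\times3$ determinant is not identically zero (its value along $z=0$, say, is easily seen to be a non-zero multiple of $xy(x+y)$). Once non-vanishing is confirmed, Saito's criterion gives that $\theta_1,\theta_2,\theta_3$ form an $S$-basis of $D(X_3,\m)$, completing the proof.
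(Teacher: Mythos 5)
Your proposal is correct and is exactly the paper's approach: the paper's entire proof reads ``This is a routine check with Saito's criterion,'' and your membership verification plus the determinant factorization is that routine check carried out in full (the reduced $3\times 3$ determinant of linear forms expands to $-2(x+y)(x+z)$, so $\det M=-2\,x^{2k}y^{2k}z^{2k}(y+z)(x+y)(x+z)$). One small caveat: your suggested test point $(x,y,z)=(1,1,-1)$ lies on $y+z=0$, where $Q(X_3,\m)$ vanishes, so it cannot distinguish the zero case---but your alternative of directly expanding the reduced determinant settles the non-vanishing.
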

\begin{proof} This is a routine check with Saito's criterion \cite{S80}. \end{proof}

\section{Free extensions}\label{exams}

Given an arrangement $\A'$ of rank $r$, an \textit{extension} of $\A'$ is an arrangement $\A$ of rank $r+1$ so that there exists a hyperplane $H_0\in\A$ with $\A^{H_0}=\A'$.  In our case, where $\A'$ is defined by linear forms in $S=\kk[x,y,z]$, we will assume $\A$ is defined by forms in $\kk[x,y,z,w]$ and $H_0$ is defined by $w=0$.  A \textit{free extension} of a multi-arrangement $(\A',\m)$ is a free arrangement $\A$ with a hyperplane $H_0\in\A$ so that the Ziegler multi-restriction satisfies $\A^{H_0}=(\A',\m)$.  The free multiplicities of the $X_3$ arrangement do admit free extensions; we write these down in Proposition \ref{prop:Extensions}, but first we need a preliminary lemma.

\begin{lem}\label{lem:LocalAlongW}
Let $n>0$ be an integer and let $\A$ be the central arrangement given by the vanishing of forms
\[
\begin{array}{c}
x-a_1z,\ldots,x-a_nz,\\
y-b_1z,\ldots,y-b_nz,\\
Ax+By+Cz,z,
\end{array}
\]
where $a_1,\ldots,a_n,b_1,\ldots, b_n,A,B,C$ are all constants with $A,B\neq 0$.  Then $\A$ is free if and only if there is a re-ordering of the $a_i$ so that $(a_1,b_1,1),\ldots,(a_n,b_n,1)$ all lie on the line $Ax+By+Cz=0$ (in $\mathbb{P}^2(\kk)$).
\end{lem}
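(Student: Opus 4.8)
The plan is to use Yoshinaga's criterion (Theorem~\ref{thm:Yosh}) applied to $\A$ with the choice $H_0 = \{z=0\}$. First I would identify the relevant local pieces. The hyperplane $z=0$ meets the lines $x-a_iz=0$ at the point $[a_i:1:0\,]$-type directions, and it meets $y-b_iz=0$ similarly; after intersecting with $z=0$ these all become hyperplanes (i.e.\ points in $\mathbb{P}^1$) inside $H_0 \cong \kk^2$ with coordinates $x,y$. So the Ziegler multi-restriction $\A^{H_0}$ is a multi-arrangement of points on $\mathbb{P}^1$: the direction of $x$ with some multiplicity, the direction of $y$ with some multiplicity, and the direction of $Ax+By$ with multiplicity one. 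One has to track carefully how many of the $a_i$ are zero (giving copies of $y$ in the restriction), how many $b_i$ are zero, and whether any $(a_i,b_i)$ is proportional to $(B,-A)$-type so as to land on the direction of $Ax+By$. The condition in the statement — that after reordering, $(a_i,b_i,1)$ lies on $Ax+By+Cz=0$ for each $i$, equivalently $Aa_i+Bb_i+C=0$ — is exactly what forces the restriction $\A^{H_0}$ to be a three-point (or fewer) multi-arrangement on $\mathbb{P}^1$ in a controlled way; three-point multi-arrangements on $\mathbb{P}^1$ are always free with explicitly known exponents, so condition (1) of Theorem~\ref{thm:Yosh} will be automatic in the "if" direction.

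For the localizations $\A_X$ with $X>H_0$ (condition (2) of Theorem~\ref{thm:Yosh}), the flats $X$ strictly above $H_0$ but not the center are exactly the rank-two flats containing the line $z=0$; these correspond to the points where $z=0$ meets the other hyperplanes. At each such point the localization $\A_X$ is the union of $z=0$ together with all hyperplanes of $\A$ passing through that point, which is a rank-two arrangement (a pencil of lines through a point in $\mathbb{P}^2$), and every rank-two central arrangement is free. So condition (2) of Theorem~\ref{thm:Yosh} holds automatically and carries no information; the whole content is in condition (1), the freeness of the Ziegler multi-restriction to $z=0$.

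Thus the proof reduces to: $\A$ is free $\iff$ the Ziegler multi-restriction $(\A^{H_0},\m_Z)$ on $\mathbb{P}^1$ is free. The forward direction ("only if") is the delicate part, and I expect it to be the main obstacle. If $\A$ is free, then $(\A^{H_0},\m_Z)$ is free, but I need to extract from this the geometric collinearity condition. The idea is to compute the Ziegler multiplicity $\m_Z$ explicitly as a function of the combinatorics (how many $a_i$ vanish, how many $b_i$ vanish, how many $(a_i,b_i)$ are proportional to the direction $(B,-A)$ cut out by $Ax+By$ on $z=0$), observe that a multi-arrangement of points on $\mathbb{P}^1$ is always free with exponents determined by its multiplicity vector, and then separately count $|\A|$ against the sum of exponents of a hypothetical free arrangement — or better, use Terao's factorization theorem: if $\A$ is free its characteristic polynomial factors, and the characteristic polynomial of $\A$ is governed by the intersection lattice, whose rank-two flats are in bijection with collinearity relations among the points $(a_i,b_i,1)$, the point direction of $Ax+By+Cz$, and $z=0$. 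Forcing the factorization (equivalently, forcing $|\A| = e_1+e_2+e_3+e_4$ where the $e_i$ match the exponents forced by the free restriction on $\mathbb{P}^1$ via Yoshinaga) will pin down that all $n$ of the points $(a_i,b_i,1)$ must lie on the single line $Ax+By+Cz=0$. For the reverse direction, once all $(a_i,b_i,1)$ lie on that line, the arrangement is supersolvable (the line $Ax+By+Cz=0$, or rather $z=0$, can be taken as a modular-type flag element) — or one directly writes down a basis for $D(\A)$ using the product structure — and freeness follows, with exponents $(1, n+1, n+1)$ or similar, which one checks matches the restriction exponents.
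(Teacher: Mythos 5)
There is a genuine gap at the heart of your reduction. You apply Theorem~\ref{thm:Yosh} to the rank-three arrangement $\A$ and conclude that ``$\A$ is free $\iff$ the Ziegler multi-restriction $(\A^{H_0},\m_Z)$ is free.'' But the Ziegler restriction to $z=0$ is the three-point multi-arrangement $x^ny^n(Ax+By)$ in two variables (note: every hyperplane $x-a_iz=0$ contains the line $x=z=0$ and so restricts to the \emph{same} point with total multiplicity $n$, independently of whether any $a_i$ vanishes --- your case analysis on vanishing $a_i$'s and on $(a_i,b_i)$ proportional to $(B,-A)$ is not needed and reflects a miscomputation of the Ziegler multiplicity). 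This restriction is free with exponents $(n,n+1)$ by Wakamiko's result, and indeed \emph{every} rank-two multi-arrangement is free. So your reduction would prove that every arrangement of the given form is free, contradicting the lemma itself. The source of the error is that the criterion of Theorem~\ref{thm:Yosh} (from Yoshinaga's 2004 paper) is a statement for rank $\geq 4$; for $3$-arrangements both of its conditions are automatic and one must instead use Yoshinaga's criterion for $3$-arrangements, which the paper does: $\A$ is free if and only if the Ziegler restriction is free with exponents $(d_1,d_2)$ \emph{and} $\chi(\A,t)=(t-1)(t-d_1)(t-d_2)$. All of the content of the lemma lives in that characteristic-polynomial condition, in both directions.

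Your fallback via Terao's factorization theorem is pointed in the right direction for the ``only if'' half and is essentially what the paper does, but as written it is too vague to certify: there are three exponents, not four (``$|\A|=e_1+e_2+e_3+e_4$''), and the identity $|\A|=\sum e_i$ carries no information --- it is the coefficient of $t$ in $\chi(\A,t)$, i.e.\ the count of rank-two flats with multiplicity, that does the work. Concretely, if $q$ is the number of the $n^2$ grid points $(a_i,b_j,1)$ lying on $Ax+By+Cz=0$, one computes $\chi(\A,t)=(t-1)\bigl(t^2-(2n+1)t+n^2+2n-q\bigr)$, which equals $(t-1)(t-n)(t-(n+1))$ iff $q=n$; since $A,B\neq 0$ the line is neither horizontal nor vertical, so $q\leq n$ with equality exactly when the claimed re-ordering (a perfect matching between the $a_i$ and $b_j$) exists. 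Moreover, Terao factorization alone gives only the ``only if'' direction; for ``if'' you propose supersolvability, which does work (under the collinearity hypothesis the point $x=z=0$ is a modular coatom) but you neither verify the modular flag nor get the exponents right (they are $(1,n,n+1)$, not $(1,n+1,n+1)$), whereas the $3$-arrangement criterion delivers both directions simultaneously.
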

\begin{proof}
We use Yoshinaga's criterion for freeness of $3$-arrangements~\cite{Y05}.  Namely, a three-arrangement is free if and only if 
\begin{enumerate}
\item $\chi(\A,t)=(t-1)(t-d_1)(t-d_2)$ and
\item The Ziegler restriction $(\A^H,\m)$ is free with exponents $(d_2,d_2)$.
\end{enumerate}
Restricting to the hyperplane $H=V(z)$, we get the multi-arrangement $\A^H=V(x)\cup V(y)\cup V(Ax+By)$ with multiplicities $\m=[n,n,1]$.  By a characterization due to Wakamiko~\cite{W07}, $(\A^H,\m)$ is free with exponents $(n,n+1)$.  Suppose that the line $Ax+By+Cz=0$ passes through $q$ of the $n^2$ intersection points of the grid defined by $x_1=a_1z,\ldots,x_n=a_nz,y_1=b_1z,\ldots,y_n=b_nz$.  Then we compute
\[
\chi(\A,t)=(t-1)(t^2-(2n+1)t+n^2+2n-q),
\]
which factors as $(t-1)(t-n)(t-(n+1))$ if and only if $q=n$.  Since $A,B\neq 0$, the re-ordering in the statement of the lemma is possible.
\end{proof}

\begin{prop}\label{prop:Extensions}
Let $n>1$ be an positive integer, $\m_n=[n,n,n,1,1,1]$, and $\alpha\neq 0,1\in\kk$.  If $(X_3,\m_n)$ has a free extension then $\alpha$ is a root of unity and $n=mt$ where $m$ is the order of $\alpha$ and $t$ is a positive integer.  Suppose $A_1,\ldots,A_t\in\kk^*$ are nonzero constants.  Then the arrangement defined by the vanishing of the forms
\[
\begin{array}{ccc}
x-A_1w,x-\alpha A_1w,\ldots,x-\alpha^{m-1}A_1w,\\
\vdots\\
x-A_tw,x-\alpha A_tw,\ldots,x-\alpha^{m-1}A_tw,\\
y-A_1w,y-\alpha A_1w,\ldots,y-\alpha^{m-1}A_1w,\\
\vdots\\
y-A_tw,y-\alpha A_tw,\ldots,y-\alpha^{m-1}A_tw,\\
z+A_1w,z+\alpha A_1w,\ldots,z+\alpha^{m-1}A_1w\\
\vdots\\
z+A_tw,z+\alpha A_tw,\ldots,z+\alpha^{m-1}A_tw\\
x-\alpha y,x+z,y+z,w
\end{array}
\]
is a free extension of $(X_3,\m_n)$.  Moreover, up to translations, every free extension has this form.
\end{prop}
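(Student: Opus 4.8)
The plan is to analyze freeness of the proposed extension $\A$ via Yoshinaga's criterion (Theorem~\ref{thm:Yosh}) applied to the hyperplane $H_0 = V(w)$. The restriction $\A^{H_0}$ will by construction be the multi-arrangement $(X_3,\m_n)$, so condition (1) of Theorem~\ref{thm:Yosh} is exactly the freeness statement already established in Theorem~\ref{thm:X3Classification}: this already forces $\m_n = [n,n,n,1,1,1]$ and, when $\alpha$ is a root of unity of order $m$, forces $n \not\equiv 1 \bmod m$. For condition (2), we must check that $\A_X$ is free for every rank-two flat $X > H_0$ in the intersection lattice. The key local computation is that the flats $X$ through $H_0$ correspond to the intersection points of the "grid lines" $x - \alpha^j A_i w$, $y - \alpha^j A_i w$, $z + \alpha^j A_i w$ with each other and with $x-\alpha y$, $x+z$, $y+z$; the subarrangements $\A_X$ localized at such a point are precisely of the shape treated in Lemma~\ref{lem:LocalAlongW} (after a linear change of coordinates sending the relevant pencil to the $z$-axis picture there). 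Lemma~\ref{lem:LocalAlongW} tells us these localizations are free exactly when the grid points that must line up on a line actually do; the periodicity of the exponents $\{A_i, \alpha A_i, \ldots, \alpha^{m-1}A_i\}$ under multiplication by $\alpha$ is what makes the collinearity conditions hold, and this is why $m \mid n$ is forced (the number of lines in each of the three "directions" must be a multiple of the period $m$). This direction — that the displayed arrangement \emph{is} a free extension — is then a matter of verifying (1) and (2) and invoking Theorem~\ref{thm:Yosh}.

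For the converse ("every free extension has this form"), I would argue as follows. Suppose $\A$ is any free extension of $(X_3,\m_n)$ with distinguished hyperplane $H_0$. First, freeness of the Ziegler restriction already pins down $\m_n = [n,n,n,1,1,1]$ by Theorem~\ref{thm:X3Classification}. The hyperplanes of $\A$ other than $H_0$ restrict onto the six hyperplanes of $X_3$; since the Ziegler multiplicities are $n,n,n,1,1,1$, the three "heavy" hyperplanes $H_1,H_2,H_3$ of $X_3$ each receive $n$ hyperplanes of $\A$, while $H_4,H_5,H_6$ each receive exactly one (which must therefore be $H_4,H_5,H_6$ themselves, viewed in $\kk[x,y,z,w]$ with no $w$). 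So, up to translation (i.e.\ adjusting by a multiple of $w$), the hyperplanes over $H_1$ are $x - c_1 w, \ldots, x - c_n w$ for some constants $c_i$, and similarly over $H_2$ and $H_3$. Now apply condition (2) of Theorem~\ref{thm:Yosh} at the rank-two flats lying over $H_1 \cap H_2 = Y_1$, over $H_1\cap H_3 = Y_2$, and over $H_2 \cap H_3 = Y_3$: localizing there produces arrangements of exactly the type in Lemma~\ref{lem:LocalAlongW}, and the lemma forces the multiset of constants appearing over $H_1$ to be matchable against those over $H_2$ so that the matched triples lie on $x - \alpha y = 0$, and similarly for the other two pairs. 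Working through these three matching conditions simultaneously forces the common multiset to be a union of full $\langle\alpha\rangle$-orbits $\{A_i, \alpha A_i, \ldots, \alpha^{m-1}A_i\}$, hence $\alpha$ is a root of unity, $m \mid n$, and the arrangement has the displayed form.

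The main obstacle, I expect, is the converse direction — specifically, extracting from the three pairwise collinearity conditions (coming from $Y_1, Y_2, Y_3$) that the constant sets must decompose into full $\langle\alpha\rangle$-orbits, and in particular that $\alpha$ must be a root of unity. Each individual application of Lemma~\ref{lem:LocalAlongW} only gives a bijection between two of the three constant sets subject to a linear relation; one has to chase these bijections around the "triangle" $Y_1 Y_2 Y_3$ and argue that composing them yields multiplication by $\alpha^{\pm 1}$ (up to scaling), so that the orbit under this composite is finite precisely when $\alpha$ is a root of unity, and that finiteness of the orbit is forced because the constant set is finite. A secondary technical point is bookkeeping the coordinate changes needed to put each local arrangement $\A_X$ into the normal form of Lemma~\ref{lem:LocalAlongW}, and checking there are no \emph{other} rank-two flats through $H_0$ to worry about (flats through $H_0$ together with two hyperplanes from different "directions" but \emph{not} sitting over a $Y_i$ give generic triple points whose localizations are automatically free). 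I would handle the "up to translations" clause by noting that replacing a defining form $\ell$ of a hyperplane of $\A$ not equal to $H_0$ by $\ell + c w$ changes neither $\A^{H_0}$ nor freeness, which is exactly the ambiguity in choosing the constants $c_i$ above.
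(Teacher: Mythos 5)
Your approach coincides with the paper's in both directions: Yoshinaga's criterion at $H_0=V(w)$, with Theorem~\ref{thm:X3Classification} supplying freeness of the Ziegler restriction and Lemma~\ref{lem:LocalAlongW} supplying local freeness along $w=0$; and, for the converse, using the localizations over $Y_2$ and $Y_3$ to force $a_i=b_i=-c_i$ and the localization over $Y_1$ to force multiplication by $\alpha$ to permute the constant set $X=\{a_1,\ldots,a_n\}$. (A minor terminological slip: the nontrivial localizations are the closed rank-\emph{three} subarrangements containing $H_0$, i.e.\ the flats lying over the points of $\A^{H_0}$, not rank-two flats; the rank-two localizations are automatically free.)

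There is one concrete case your orbit argument does not cover. From ``multiplication by $\alpha$ permutes the finite set $X$'' you conclude that $X$ decomposes into full $\langle\alpha\rangle$-orbits and hence $n=mt$; but $\{0\}$ is itself a full $\langle\alpha\rangle$-orbit of size one, so if $0\in X$ this reasoning only gives $n=tm+1$, and $\alpha$ being a root of unity still follows but divisibility $m\mid n$ does not. The paper closes this case by noting that $n\equiv 1 \bmod m$ contradicts freeness of $(X_3,\m_n)$ by Theorem~\ref{thm:X3Classification}, whence $0\notin X$ and multiplication by $\alpha$ acts freely on $X$, giving $n=mt$. The fix is short, but it requires a second appeal to the classification theorem rather than just finiteness of $X$, so you should add it explicitly.
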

\begin{proof}
Freeness of the arrangement $\A$ given by the indicated forms can be verified by applying Yoshinaga's criterion~\cite{Y04}; since $(X_3,\m_n)$ is free by Theorem~\ref{thm:X3Classification}, we check that $\A$ is locally free along $w=0$.  This reduces to checking freeness of the rank three closed sub-arrangements of $\A$ along $w=0$; all of these are free by Lemma~\ref{lem:LocalAlongW}.

Now we show that any free extension must have this form.  By Theorem~\ref{thm:X3Classification} and Theorem~\ref{thm:Yosh}, a free extension $\A$ of $(X_3,\m)$ exists only if $\m=[n,n,n,1,1,1]$ for some integer $n>1$.  In this case $\A$ has the form
\[
\begin{array}{c}
x-a_1w,\ldots,x-a_nw\\
y-b_1w,\ldots,y-b_nw\\
z-c_1w,\ldots,z-c_nw\\
x-\alpha y+Aw,x+z+ Bw,y+z+Cw,w,
\end{array}
\]
for some constants $a_1,\ldots,a_n,b_1,\ldots,b_n,c_1,\ldots,c_n,A,B,$ and $C$.  Translating in the $x,y,$ and $z$ directions we may assume that $A=B=C=0$.  We now check local freeness along $w=0$.  Let $\A_{xyw},\A_{xzw},\A_{yzw}$ be the closed rank three sub-arrangements consisting of hyperplanes which contain the line $x=y=w=0$, $x=z=w=0$, and $y=z=w=0$, respectively.  Projectively, these are all arrangements of the form considered in Lemma~\ref{lem:LocalAlongW}.  Assuming that $\A_{xzw}$ and $\A_{yzw}$ are both free, Lemma~\ref{lem:LocalAlongW} allows a re-indexing of the $a_i,b_i$ so that the points $(a_i,0,c_i,1)$ for $i=1,\ldots,n$ lie on the line determined by $x+z$ and the points $(0,b_i,c_i,1)$ for $i=1,\ldots,n$ lie along the line determined by $y+z$.  It follows that $a_i=b_i=-c_i$ for $i=1,\ldots,n$.

Now, by Lemma~\ref{lem:LocalAlongW}, $\A_{xyw}$ is free if and only if there are $n$ points among the $n^2$ points $(a_i,b_j,0,1)=(a_i,a_j,0,1)$ which lie along the line $x-\alpha y$.  Set $X=\{a_1,\ldots,a_n\}$ and consider the action of $\alpha$ on $X$ by multiplication.  We see that this action must be a permutation action.  Since $|X|=n>1$, $\alpha$ must be a root of unity; we denote the order of $\alpha$ by $m$.  Suppose $0\in X$.  Then the multiplication action of $\alpha$ on $X-\{0\}$ is faithful, so $|X-\{0\}|=tm$ (for some integer $t$) and $n=|X|=tm+1$.  But then $\m_n=[n,n,n,1,1,1]$ is not a free multiplicity by Theorem~\ref{thm:X3Classification} since $n\equiv 1 \mod m$.  It follows that $0\notin X$, so $\alpha$ acts faithfully on $X$ by multiplication, and $n=|X|=mt$ for some integer $t$.  Choosing representatives $A_1,\ldots,A_t$ in each $\alpha$-orbit gives the form in the statement of the proposition.
\end{proof}

\begin{remark}
Every free arrangement $\A$ in Proposition~\ref{prop:Extensions} has the property that $\A^{w=0}=X_3$ is not free.  For instance, taking $\alpha=-1$ and $t=1$ provides a one-parameter family of free arrangements of rank four with ten hyperplanes whose restriction is not free.  The first such example was a free arrangement of rank five with 21 hyperplanes provided by Edelman and Reiner~\cite{ER93}.
\end{remark}

\begin{remark}
Yoshinaga's extendability criterion~\cite{Y10} does not apply; the $X_3$ arrangement is locally $A_2$ but only has a positive system when $\alpha=-1$.  Furthermore, even if $\alpha=-1$, the $(*)$ criterion of~\cite[Theorem~2.5]{Y10} does not hold for the multiplicities $[n,n,n,1,1,1]$ when $n$ is even.
\end{remark}


\begin{cor}
If $\A$ is an extension of $xyz(x-\alpha y)(x+z)(y+z)$ (for $\alpha\neq 0,1$), then it satisfies Terao's conjecture (see~\cite[Conjecture~4.138]{OT92}).  That is, freeness of $\A$ is determined from its intersection lattice.
\end{cor}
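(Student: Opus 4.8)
The plan is to read the freeness of an extension of $xyz(x-\alpha y)(x+z)(y+z)$ off of its intersection lattice by combining Yoshinaga's criterion (Theorem~\ref{thm:Yosh}), Lemma~\ref{lem:LocalAlongW}, Proposition~\ref{prop:Extensions}, and Theorem~\ref{thm:X3Classification}. Let $\A$ be such an extension with distinguished hyperplane $H_0=\{w=0\}$, and let $\A'$ be any essential central arrangement over our characteristic-zero field with $L(\A')\cong L(\A)$. Terao's conjecture for $\A$ asks that $\A'$ be free exactly when $\A$ is; by a standard contrapositive it suffices to show that $\A$ free implies $\A'$ free (if $\A$ were not free yet some lattice-equivalent $\A'$ were free, applying the implication with $\A$ and $\A'$ interchanged would force $\A$ free). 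First I would check that $\A'$ is again an extension of an $X_3$-type arrangement: if $H_0'\in\A'$ corresponds to $H_0$ under the lattice isomorphism, then $L(\A'^{H_0'})$ is the upper interval $[H_0',\hat1]$ of $L(\A')$, hence isomorphic to $L(X_3)$, so $\A'^{H_0'}$ realizes the rank three whirl matroid, and by the moduli discussion of Section~\ref{defs} is, after a linear change of coordinates, the $X_3$-arrangement with some modulus $\alpha'\neq 0,1$. Assuming $\A$ is free, Theorem~\ref{thm:Yosh} and Theorem~\ref{thm:X3Classification} show its Ziegler multiplicity along $H_0$ is $[n,n,n,1,1,1]$; since the Ziegler multiplicity is a lattice invariant, the same holds for $\A'$ along $H_0'$, and then a translation in the $x,y,z$-directions (possible because the relevant $3\times3$ linear system has determinant $\alpha'-1\neq 0$) puts $\A'$ in the normal form
\[
x-a_i'w,\quad y-b_j'w,\quad z-c_k'w\quad(1\le i,j,k\le n),\qquad x-\alpha'y,\ x+z,\ y+z,\ w,
\]
paralleling the explicit normal form of a free $\A$ given by Proposition~\ref{prop:Extensions}.

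Next I would apply Theorem~\ref{thm:Yosh} to $\A'$ with $H=H_0'$, and first establish condition~(2): $\A'_X$ is free for every flat $X>H_0'$ other than the center. Rank two such flats give pencils, which are free. A rank three such flat sits over a point of the modulus-$\alpha'$ copy of $X_3$: over a double point $\A'_X$ is a near-pencil or a Boolean arrangement, hence free; over one of the three triple points $\A'_X$ has precisely the shape treated in Lemma~\ref{lem:LocalAlongW} (with coefficients $A=1$, $B=-\alpha'$, both nonzero), and by that lemma (together with the characteristic polynomial computation in its proof) $\A'_X$ is free if and only if the number $q$ of triple points of $\A'_X$ equals $n$; but $q$ is a lattice invariant and equals $n$ because the corresponding subarrangement of the free $\A$ is free. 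So condition~(2) holds for $\A'$.

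For condition~(1) I would reuse the freeness of $\A'_X$ over the three triple points: Lemma~\ref{lem:LocalAlongW} then yields reindexings with $a_i'=\alpha'b_i'$, $c_i'=-a_i'$, and $c_i'=-b_i'$, whence $\{a_1',\dots,a_n'\}=\{b_1',\dots,b_n'\}=-\{c_1',\dots,c_n'\}=:X'$ and $\alpha'X'=X'$. Since $|X'|=n>1$, multiplication by $\alpha'$ permutes the finite set $X'$, forcing $\alpha'$ to be a root of unity of some order $m'$. Now I would transport one more lattice invariant from $\A$: for a free $\A$, Proposition~\ref{prop:Extensions} shows that no hyperplane over $H_1,H_2,H_3$ is a coordinate hyperplane, i.e.\ the rank three flat $X_0$ obtained by intersecting the three weight-one hyperplanes of $\A$ lies on exactly three hyperplanes; as "number of hyperplanes through a flat" is a lattice invariant, the analogous flat $X_0'$ of $\A'$ lies on exactly three hyperplanes, which in the normal form forces $0\notin X'$. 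Hence $\alpha'$ acts freely on $X'$, so $m'\mid n$ and $(\alpha')^{n-1}=(\alpha')^{-1}\neq 1$; by Theorem~\ref{thm:X3Classification} the multi-arrangement $(X_3,[n,n,n,1,1,1])$ with modulus $\alpha'$ is free, which is condition~(1). Theorem~\ref{thm:Yosh} then gives that $\A'$ is free.

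I expect the last step to be the crux: recovering the numerical constraint $(\alpha')^{n-1}\neq1$ on the modulus of $\A'$ from purely combinatorial data. The key realizations are that once the three "triple-point" subarrangements along $w=0$ are known to be free --- which follows from a lattice-invariant count via Lemma~\ref{lem:LocalAlongW} --- they pin $\alpha'$ down to permuting a single finite set $X'$ and thus force $\alpha'$ to be a root of unity, while the lattice-invariant statement that the flat $X_0$ lies on only the three weight-one hyperplanes (a feature of free $\A$ coming from Proposition~\ref{prop:Extensions}) is exactly what rules out $0\in X'$ and hence the non-free congruence $n\equiv 1\pmod{m'}$. Everything else is bookkeeping with Yoshinaga's criterion, and the case where $\A$ is not free follows from the free case as noted above.
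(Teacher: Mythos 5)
Your proof is correct and follows essentially the same route as the paper's, which simply asserts that each step of the proof of Proposition~\ref{prop:Extensions} (reading off the Ziegler multiplicity $[n,n,n,1,1,1]$, checking local freeness along $w=0$ via Lemma~\ref{lem:LocalAlongW}, deducing $n=mt$, and invoking Yoshinaga's criterion) is combinatorial. The one point you make explicit that the paper leaves implicit is how the condition $0\notin X'$ (equivalently $n\not\equiv 1 \bmod m'$) is detected by the lattice --- namely via the number of hyperplanes containing the flat $H_4\cap H_5\cap H_6$ --- which is precisely the detail needed to transport the argument to a lattice-equivalent arrangement that is not yet known to be free.
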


\begin{proof}
This is implicit in the proof of Proposition~\ref{prop:Extensions}.  Explicitly, all of the following steps are combinatorial:
\begin{itemize}
\item determining that the Ziegler multi-restriction has the multiplicity $[n,n,n,1,1,1]$ for $n>1$ (by Theorem~\ref{thm:X3Classification})
\item determining local freeness of $\A$ along $w=0$ (by Lemma~\ref{lem:LocalAlongW}), which implies $\alpha$ is a root of unity
\item determining that $n=mt$ for some $t$, where $m$ is the order of $\alpha$ (by Theorem~\ref{thm:X3Classification})
\item Finally, determining that $\A$ is free by Yoshinaga's criterion~\cite{Y04} and Theorem~\ref{thm:X3Classification}. \qedhere
\end{itemize}
\end{proof}

%
%


\

\bibliographystyle{amsplain}

\bibliography{X3bib}

\end{document}